\newtheorem{theorem}{Theorem}
\newtheorem{remark}{Remark}%
\newtheorem{assumption}{Assumption}
\newtheorem{lemma}{Lemma}
\newtheorem{definition}{Definition}%
\definecolor{spycolor}{RGB}{235, 135, 35}
\newcommand{\figDoubleColumn}[5]{
\begin{figure*}[!t]
\begin{center}
  \includegraphics[trim=#2, clip, width=#1\linewidth]{./figures/#3}
  \caption{#5}
  \label{figure:#4}
\end{center}
\end{figure*}
}
\definecolor{indigo}{RGB}{75, 0, 130}
\newcommand{\indigoCircle}{\textcolor{indigo}{\Large$\bullet$}}
\newcommand{\indigoStar}{\textcolor{indigo}{\Large$\filledstar$}}
\newcommand{\indigoTriangle}{\textcolor{indigo}{\Large$\filledtriangleup$}}
\newcommand{\rv}[2][final]{%
  \ifthenelse{\equal{#1}{revision}}%
    {\textcolor{blue}{#2}}
    {#2}
}
\newcommand{\rvv}[2][final]{%
  \ifthenelse{\equal{#1}{revision}}%
    {\textcolor{blue}{#2}}
    {#2}
}
\begin{document}

\title[Adaptively Inexact method for Bilevel Learning]{An Adaptively Inexact Method for Bilevel Learning Using Primal-Dual Style Differentiation}


\author[1]{\fnm{Lea} \sur{Bogensperger}}\email{lea.bogensperger@uzh.ch}

\author[2]{\fnm{Matthias J.} \sur{Ehrhardt}}\email{m.ehrhardt@bath.ac.uk}

\author[3]{\fnm{Thomas} \sur{Pock}} \email{thomas.pock@tugraz.at}

\author*[2]{\fnm{Mohammad Sadegh} \sur{Salehi}}\email{mss226@bath.ac.uk}

\author[2]{\fnm{Hok Shing} \sur{Wong}}\email{hsw43@bath.ac.uk}
\affil[1]{\orgdiv{Department of Quantitative Biomedicine}, \orgname{University of Zurich}, \orgaddress{\postcode{8057}, \city{Zurich}, \country{Switzerland}}}

\affil[2]{\orgdiv{Department of Mathematical Sciences}, \orgname{University of Bath}, \orgaddress{\postcode{BA2 7AY}, \city{Bath}, \country{UK}}}

\affil[3]{\orgdiv{Institute of Computer Graphics and Vision}, \orgname{Graz University of Technology}, \orgaddress{\postcode{8010}, \city{Graz}, \country{Austria}}}

\abstract{We consider a bilevel learning framework for learning linear operators. In this framework, the learnable parameters are optimized via a loss function that also depends on the minimizer of a convex optimization problem (denoted lower-level problem). We utilize an iterative algorithm called `piggyback' to compute the gradient of the loss and minimizer of the lower-level problem. Given that the lower-level problem is solved numerically, the loss function and thus its gradient can only be computed inexactly. To estimate the accuracy of the computed hypergradient, we derive an a-posteriori error bound, which provides guides for setting the tolerance for the lower-level problem, as well as the piggyback algorithm. To efficiently solve the upper-level optimization, we also propose an adaptive method for choosing a suitable step-size. To illustrate the proposed method, we consider a few learned regularizer problems, such as training an input-convex neural network.}

\keywords{ Bilevel Learning, piggyback algorithm, saddle-point problems, Machine Learning, Input-convex neural networks}



\maketitle

\section{Introduction}
\label{sec:intro}
Variational image reconstruction has proven to be a successful approach for solving inverse problems in imaging such as image reconstruction. The reconstruction can be obtained by solving the following minimization problem:
\begin{equation}
    \min_xD(Ax,u)+ R_{\theta}(x).
\end{equation}
Here $D$ denotes the data fidelity, $R_\theta$ is the regularizer, which is parameterized by $\theta$. Moreover, $A$ is a linear operator related to the specific inverse problem of interest, such as image denoising, deblurring or tomography, to name but a few.


Classical approaches rely on hand-crafted regularizers designed to encode prior information about the desired reconstruction. Notable examples include total variation (TV) \cite{rudin1992nonlinear}, total generalized variation (TGV) \cite{bredies2010total}, and sparsity-promoting regularizer \cite{daubechies2004iterative}. In these methods, the parameter 
$\theta$ often corresponds to the regularization parameter, controlling the trade-off between data fidelity and regularization. Various parameter selection strategies have been proposed in the literature, including heuristic approaches like the L-curve method \cite{hansen1992analysis,hansen2010discrete}, the discrepancy principle and other a-posteriori rules \cite{benning2018modern,bungert2020variational}. 

In recent years, data-driven approaches have emerged, enabling learned regularizers to be incorporated into the variational framework \cite{aharon2006k,chen2014insights,li2020nett,lunz2018adversarial,mukherjee2020learned,xu2012low}. The parameter $\theta$ in these methods could represent anything from the coefficients of a sparsifying synthesis dictionary to the parameters of a neural network. One prominent approach for learning parameters in regularizers is bilevel learning \cite{calatroni2017bilevel,crockett2022bilevel,ehrhardt2023optimal,kunisch2013bilevel, ochs2016techniques}. In this framework, given a set of training data pairs $(x^*_i,u_i)_{i=1,\dots,n}$, the goal is to learn the parameters $\theta$ by solving the bilevel optimization problem:
\rvv{
\begin{equation}\label{bilv-prob}
    \begin{aligned}
        \min_{\theta}\quad &\left\{\mathcal{L}(\theta):=\frac{1}{n}\sum_{i=1}^n\ell(\hat{x}_i(\theta))\right\}\\ 
\text{s.t.} \quad & \hat{x}_i(\theta)=\arg\min_x\Phi_{i}(x;\theta),
    \end{aligned}
\end{equation}
}
\rvv{where $\Phi_{i}(x;\theta)\coloneqq D(Ax,u_i)+R_{\theta}(x)$ denotes the lower-level objective function for each data sample} $i=1,\dots,n$, the reconstruction  $\hat{x}_i(\theta)$ is the solution to the lower-level optimization problem \rvv{with respect to measurement $u_i$}, and $\ell$ is a function that quantifies the quality of the reconstruction, such as the mean squared error  $\|\hat{x}_i(\theta)-x^*_i\|^2$. 

\rvv{We focus on the setting where the learnable parameter $\theta$ is a linear operator $K$ and the lower-level problem in \eqref{bilv-prob} can be formulated as the following general optimization problem:
\begin{equation}\tag{P}\label{piggy_lower}
    \min_{x\in \mathcal{X}} \{\Phi(x;K):=f(Kx) + g(x)\},
\end{equation}
where $f$ and $g$ are proper, convex, lower semicontinuous functions. For example, this corresponds to $R_{\theta}(x)=\psi(Wx)$ in the Fields of Experts model, with the filters $W$ and the activation $\psi$ acting as the learnable operator $K$ and the function $f$ respectively, while the data fidelity acts as $g$. In the setting of training an input-convex neural network, \eqref{piggy_lower} corresponds to a reformulation of the lower-level problem in \eqref{bilv-prob}, which will be presented in later section.}

Bilevel learning is mathematically appealing as it optimizes the parameters based on target data in an explicit measure of reconstruction quality. However, it poses significant computational challenges. One major difficulty is the fact that evaluating the upper-level cost requires solving the lower-level optimization problem exactly. However, exact solutions to the lower-level problem are typically unattainable in practice, as they are often solved numerically up to a certain tolerance. Achieving very small tolerances is computationally prohibitive, especially for large-scale problems. This necessitates the consideration of inexact solutions with controllable tolerances that are as large as possible to reduce computational cost, yet small enough to maintain the accuracy required for effective optimization.

One common class of methods for solving bilevel problems is gradient-based optimization. The key ingredient in these methods is the computation of the gradient of the upper level cost with respect to the parameters $\theta$, which is often referred to as the hypergradient. There are several common approaches for the computation of the hypergradient in the literature. The implicit function theorem (IFT) \cite{bengio2000gradient,ehrhardt2024analyzing,franceschi2018bilevel,grazzi2020iteration,pedregosa2016hyperparameter} approach involves differentiating the optimality condition of the lower-level problem, but requires inverting the Hessian matrix of the lower-level objective via iterative algorithms such as conjugate gradient method, which is computationally expensive. Unrolling methods \cite{bolte2021nonsmooth,bolte2022automatic,kofler2023learning,mehmood2020automatic,ochs2016techniques}, on the other hand compute the hypergradient using automatic differentiation by unrolling the iterations of the optimization algorithm used to solve the lower-level problem. However, this approach can be memory-intensive, especially when a larger number of iterations is used for solving the lower-level problem. The piggyback method \cite{Bogensperger2022,AntoninChambolle2021,Piggyback_classic} computes the hypergradient by solving an adjoint problem, often in conjunction with the lower-level optimization. In this approach, the lower-level solution and the corresponding adjoint problem are solved simultaneously. 

\rv{A class of methods for solving bilevel problems involves reducing them to single-level formulations. One such approach, proposed in \cite{valkonen_single_step, singleloopValkonen}, utilizes a single step of a primal-dual algorithm to approximate the lower-level solution. The adjoint problem, arising from the IFT approach, is then solved using either Conjugate Gradient (CG) with high accuracy or an analytic exact inversion of the lower-level Hessian in \cite{valkonen_single_step}, while \cite{singleloopValkonen} employs a general linear solver. Although these methods do not require a highly accurate lower-level solution at each iteration, they do not provide adaptivity in determining the necessary accuracy for solving the lower-level problem to ensure progress in the upper-level iterations. This results in a high number of upper-level iterations, and their global convergence relies on assumptions regarding initialisation.}

\rv{
Another group of single-level reduction methods, known as fully first-order methods \cite{bome,fullyFirstOrderStochastic,penaltyMethodsNonconvexBO}, replaces the lower-level problem with a so-called value function \cite{dempe_bilevel_2020}. These methods reformulate the bilevel problem as a constrained optimization problem, avoiding the need for IFT-based evaluations and thus eliminating the computational burden of computing or approximating the inverse Hessian of the lower-level problem. Instead, they approximate the lower-level solution using a fixed number of iterations across all upper-level iterations. However, they require careful tuning of step-sizes and penalty multipliers to ensure convergence. Notably, the majority of works employing these approaches focus on stochastic bilevel problems.}

In this paper, we focus on the piggyback method for hypergradient computation, and address several challenges that arise in its practical implementation. First, we propose a framework that allows inexact solutions of the lower-level problem by dynamically adjusting the tolerance during optimization. This helps balancing computational cost and accuracy. Additionally, while piggyback algorithms have been successfully applied, there are no a-posteriori error bounds to control the accuracy of the computed hypergradient. We address this gap by deriving such error bounds, providing a framework for managing inexactness in a principled manner.

Another challenge in applying gradient-based methods to bilevel problems is the choice of step-size for the upper-level optimization. Convergence analysis for \rv{nested gradient-based} bilevel methods typically assumes highly accurate hypergradients and the use of a fixed, small step-size \cite{pedregosa2016hyperparameter,ghadimi2018approximation, ji2021bilevel}. In the context of piggyback-style computed hypergradients, existing analyses also require highly accurate hypergradients and a sufficiently small, fixed step-size \cite{Bogensperger2022}. However, in practice, determining an appropriate step-size when working with approximate hypergradients is non-trivial. Assuming access to inexact lower-level solutions with controllable error and inexact hypergradients with a-posteriori error bounds, a condition was developed in \cite{salehi2025adaptivelyinexactfirstordermethod} to adaptively select step-sizes and the required accuracy, ensuring convergence in a robust and cost-efficient manner. To connect and apply the method in \cite{salehi2025adaptivelyinexactfirstordermethod} to primal-dual style differentiation, we propose an a-posteriori error bound analysis for the lower-level and adjoint solutions, as well as a-posteriori error bounds for the inexact hypergradient.
We demonstrate the effectiveness of our approach through some bilevel learning tasks, including learning the discretization of the total variation (TV) and training input-convex neural networks (ICNNs) regularizers \cite{amos2017input,mukherjee2020learned}. The proposed method provides a framework that balances computational efficiency with rigorous error control, making it well-suited for large-scale bilevel learning applications.
\\ \\
\rv[final]{To summarize, our contributions are as follows:
\begin{itemize}
    \item We provide a detailed theoretical analysis establishing a-posteriori controllable error bounds for primal-dual-style bilevel differentiation, extending the framework in~\cite{salehi2025adaptivelyinexactfirstordermethod}.
    \item We prove a convergence result for primal-dual-style bilevel methods, which, to the best of our knowledge, had not been established before.
    \item We demonstrate the suitability of our method to learn improved discretizations of the Total Variation (TV) and to train input convex neural networks (ICNNs) as regularizers via bilevel learning. In the latter case, we show improved performance over previous methods for training such data-adaptive regularizers.
\end{itemize}}

\subsection{Notation}
We denote the Euclidean norm of vectors and the $2$-norm of matrices with $\| \cdot \|$. The conjugate of a function $f$ is given by $f^*(y) = \sup_{x}  \langle x, y \rangle - f(x)
$. The constant $\mu_f$ represents the strong convexity constant of a strongly convex function $f$. The gradient and Hessian of a function $f$ are denoted by $\nabla f$ and $\nabla^2 f$, respectively. Additionally, for a function $f$, the Lipschitz and H\"older continuity constants of the gradient and Hessian are denoted by $L_{\nabla f}$ and $L_{\nabla^2 f}$, respectively.

\section{Computation of Hypergradient}\label{sec:Piggy}
\subsection{Problem Setting}
The corresponding dual problem of \eqref{piggy_lower} is given by:
\begin{equation}\tag{D}\label{piggy_lower_dual}
    \sup_{y\in \mathcal{Y}} -f^*(y) - g^*(-K^*y),
\end{equation}
where $f^*,g^*$ are the convex conjugates of $f,g$ respectively. A standard approach in convex optimization literature \cite{chambolle2016introduction,chambolle2011first} is to formulate this as a saddle-point problem:
\begin{equation}\tag{S}\label{saddle}
    \min_{x\in \mathcal{X}}\max_{y\in \mathcal{Y}} \{h(x,y \rv{;} K) \coloneqq \langle Kx,y\rangle + g(x) - f^*(y)\}.
\end{equation}
Assuming a solution $(\hat{x}(K),\hat{y}(K))$ of \eqref{saddle} exists, then $\hat{x}(K)$ is also a solution to \eqref{piggy_lower}. The first-order optimality condition can be stated as:
\begin{equation}\label{Arrow-Hurwicz}
    \begin{cases}
    0&\in K\hat{x}(K) - \partial f^*(\hat{y}(K)) \\
    0&\in K^*\hat{y}(K) + \partial g(\hat{x}(K)).
\end{cases}
\end{equation}

In this paper, we aim to learn a linear operator $K$, or specific components of it, in a supervised fashion, via solving the following bilevel problem:
\begin{subequations}\label{piggy_bilevel}
\begin{align}
        \min_{K} \{\mathcal{L}(K) &\coloneqq \ell (\hat{x}(K), \hat{y}(K))\}\label{upperpiggy}\\
         {(\hat{x}(K), \hat{y}(K))} &\coloneqq \arg\min_{x\in \mathcal{X}}\max_{y\in \mathcal{Y}} h(x,y\rv{;}K) .\label{lowerpiggy}
    \end{align}
\end{subequations}

\rv{To define the upper-level loss with respect to the lower-level solution, we assume that $(\hat{x}(K),\hat{y}(K))$ given $K$ is unique. This can be guaranteed if $f^*,g$ are strongly convex. \begin{assumption}\label{ass_strong-cvx}
    $f^*$ and $g$ are $\mu_{f^*}$- and $\mu_g$-strongly convex, respectively.
\end{assumption}
\begin{remark}\label{remark1}
    In particular, this implies that $f$, $g^*$ have Lipschitz gradient, i.e. $f$, $g^*$ are $C^{1,1}$ \cite{zhou2018fenchel}.
 \end{remark}
}
Here, the upper-level loss function with respect to the lower-level solution is represented by $\ell$. Throughout the paper, we make the following assumption on $\ell$.

\begin{assumption}\label{ass_pigy_1}
    The upper-level loss $\ell: \mathcal{X}\times \mathcal{Y} \to \mathbb{R}_+$ is separable, that is, $\ell(x,y)=\ell_1(x)+\ell_2(y)$. Moreover, $\ell_1,\ell_2$ are assumed to be $L_{1},L_{2}$-smooth respectively, that is, $\nabla\ell_1,\nabla\ell_2$ are $L_{1},L_{2}$ Lipschitz.
\end{assumption}

\begin{remark}
     We particularly consider $\ell(x,y)=\ell(x)$ for the bilevel learning examples. However, the a-posteriori error bound can be extended to more general $\ell$, where we assume $\nabla\ell$ is $L$ Lipschitz.
 \end{remark}
Since the upper-level problem (\ref{upperpiggy}) can be large-scale, we are interested in gradient-based bilevel methods \cite{crockett2022bilevel,pedregosa2016hyperparameter} which employ $\nabla \mathcal{L}(K)$ to solve (\ref{upperpiggy}). Under \Cref{ass_strong-cvx} and \ref{ass_pigy_1}, $(\hat{x}(K),\hat{y}(K))$ given $K$ is unique, and the hypergradient can be calculated as follows \cite{Bogensperger2022}:
\begin{equation}\label{hypergrad_piggy}
    \nabla \mathcal{L}(K) = \hat{y}(K) \otimes \hat{X}(K) + \hat{Y}(K) \otimes \hat{x}(K),
\end{equation}
where the adjoint variables $(\hat{X}(K) , \hat{Y}(K) ) \in \mathcal{X} \times \mathcal{Y}$ solve the quadratic adjoint saddle-point problem below \small{
\begin{equation}\tag{AS}\label{saddle_adjoint}
\begin{aligned}
    \min_{X\in \mathcal{X}} \max_{Y \in \mathcal{Y}} \ &\langle KX, Y \rangle + \frac{1}{2}\langle \nabla^2g(\hat{x}(K))X , X \rangle \\ 
    &- \frac{1}{2}\langle\nabla^2f^*(\hat{y}(K))Y, Y \rangle\\
    &+\langle \nabla\ell_1(\hat{x}(K)),X\rangle+\langle \nabla\ell_2(\hat{y}(K)),Y\rangle.
\end{aligned}
\end{equation}
}\normalsize

\rv{Given a convex function $g$, Alexandrov's theorem \cite{howard1998alexandrov} states that the set-valued subgradient map $\partial g$ is differentiable almost everywhere. Moreover, at a point of differentiability $x$, $\partial g(x)$ is single valued and hence $\nabla g(x)$ exists. We then say that $\nabla^2g(x)$ exists and is equal to $\nabla(\nabla g)(x)$. In particular, $\nabla^2g(x)$ is non-singular \cite{Bogensperger2022} if $g$ is also strongly convex. In \eqref{saddle_adjoint} and what follows, both $\nabla^2g$ and $\nabla^2f^*$ will be interpreted in this sense.}

The following auxiliary lemma will be used in our analysis to provide an a-posteriori control over the optimality gap for strongly convex functions.
\begin{lemma}\label{lm:aposteriori_sc}
     (\cite{ehrhardt2021inexact}) Let $\Phi : \mathcal{X} \rightarrow \mathbb{R}$ be $\mu$-strongly convex and differentiable\footnote{\rv{The above lemma still holds when $\Phi$ is just $\mu$-strongly convex. The gradient $\nabla\Phi$ can be replaced by any subgradient $g_x\in\partial\Phi(x)$ \cite[Theorem 5.24(iii)]{beck2017first}.}}. Denoting the minimizer of $\Phi$ by $x^* \in \mathcal{X}$, for all $x \in \mathcal{X}$ we have
     \begin{equation*}
           \lVert x^* - x \rVert \leq \frac{1}{\mu} \lVert \nabla \Phi(x) \rVert.
     \end{equation*}
\end{lemma}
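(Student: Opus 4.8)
The plan is to derive this as a direct consequence of strong convexity together with the first-order characterization of the minimizer. First I would recall that $\mu$-strong convexity of a differentiable $\Phi$ is equivalent to the monotonicity-type inequality
\begin{equation*}
    \langle \nabla \Phi(x) - \nabla \Phi(z), x - z \rangle \geq \mu \lVert x - z \rVert^2 \quad \text{for all } x, z \in \mathcal{X}.
\end{equation*}
Applying this with $z = x^*$ and using the optimality condition $\nabla \Phi(x^*) = 0$ gives
\begin{equation*}
    \langle \nabla \Phi(x), x - x^* \rangle \geq \mu \lVert x - x^* \rVert^2.
\end{equation*}

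Next I would bound the left-hand side from above by Cauchy--Schwarz, obtaining $\mu \lVert x - x^* \rVert^2 \leq \lVert \nabla \Phi(x) \rVert \, \lVert x - x^* \rVert$. If $x = x^*$ the claimed inequality is trivial; otherwise I divide both sides by $\lVert x - x^* \rVert > 0$ to conclude $\mu \lVert x^* - x \rVert \leq \lVert \nabla \Phi(x) \rVert$, which is the assertion after dividing by $\mu$.

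I do not expect any serious obstacle here: the result is essentially textbook, and the only points requiring a little care are (i) justifying the strong-convexity gradient inequality in the form above (which follows from the definition $\Phi(z) \geq \Phi(x) + \langle \nabla \Phi(x), z-x\rangle + \tfrac{\mu}{2}\lVert z-x\rVert^2$ by summing the inequality with $x$ and $z$ swapped), and (ii) handling the degenerate case $x = x^*$ separately so that the division step is legitimate. An alternative, equally short route would be to apply the strong-convexity lower bound at $z = x^*$, namely $\Phi(x^*) \geq \Phi(x) + \langle \nabla\Phi(x), x^* - x\rangle + \tfrac{\mu}{2}\lVert x^* - x\rVert^2$, combine it with the trivial bound $\Phi(x^*) \leq \Phi(x)$ to get $\tfrac{\mu}{2}\lVert x^* - x\rVert^2 \leq \langle \nabla\Phi(x), x - x^*\rangle \leq \lVert \nabla\Phi(x)\rVert\lVert x^* - x\rVert$, and again divide; since the statement is quoted from \cite{ehrhardt2021inexact}, I would simply cite that reference for the full details.
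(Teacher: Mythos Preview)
Your main argument is correct and standard. Note, however, that the paper does not actually prove this lemma: it is simply quoted from \cite{ehrhardt2021inexact} without proof, so there is no approach in the paper to compare against.

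One small point: your proposed alternative route does not quite give the same constant. From $\Phi(x^*) \geq \Phi(x) + \langle \nabla\Phi(x), x^* - x\rangle + \tfrac{\mu}{2}\lVert x^* - x\rVert^2$ and $\Phi(x^*) \leq \Phi(x)$ you obtain $\tfrac{\mu}{2}\lVert x^* - x\rVert^2 \leq \lVert \nabla\Phi(x)\rVert\,\lVert x^* - x\rVert$, which after dividing yields only $\lVert x^* - x\rVert \leq \tfrac{2}{\mu}\lVert \nabla\Phi(x)\rVert$, a bound weaker by a factor of two. Your primary monotonicity-based argument is the one that delivers the sharp constant $1/\mu$ stated in the lemma.
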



\begin{lemma}\label{lm:aposteriori_sc_saddle}
    Let \Cref{ass_strong-cvx} hold. For a given operator $K$, and considering the unique saddle-point $(\hat{x}, \hat{y})$ of \eqref{saddle}, for all $(x, y) \in \mathcal{X}\times \mathcal{Y}$ we have\footnote{\rv{By slight abuse of notation, we still denote any subgradients with $\nabla g(x)$ and $\nabla f^*(y)$.}}

\begin{subequations}\label{bound_x_y}
    \begin{align}
         \|x - \hat{x}\| &\leq \frac{\|\nabla g(x) + K^*\nabla f(Kx)\|}{\mu_g}\label{bound_1}, \\
         \|y - \hat{y}\| &\leq \frac{\|\nabla f^*(y)-K\nabla g^*(-K^*y)\|}{\mu_{f^*}}\label{bound_2}.
    \end{align}
\end{subequations}
\end{lemma}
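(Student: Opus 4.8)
The plan is to reduce both inequalities to \Cref{lm:aposteriori_sc} by identifying $\hat{x}$ and $\hat{y}$ as the unique minimizers of two strongly convex, differentiable functions: the primal objective $\Phi(\cdot;K)$ of \eqref{piggy_lower}, and (the negative of) the dual objective of \eqref{piggy_lower_dual}, namely $\Psi(y) := f^*(y) + g^*(-K^*y)$.

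First I would record the regularity consequences of \Cref{ass_pigy_2}. Since $g$ is $\mu_g$-strongly convex, its conjugate $g^*$ is differentiable with $1/\mu_g$-Lipschitz gradient; symmetrically, strong convexity of $f^*$ makes $f$ differentiable with Lipschitz gradient. Hence both $x \mapsto f(Kx) + g(x)$ and $y \mapsto f^*(y) + g^*(-K^*y)$ are everywhere differentiable, and the chain rule gives $\nabla \Phi(x;K) = \nabla g(x) + K^*\nabla f(Kx)$ and $\nabla \Psi(y) = \nabla f^*(y) - K\nabla g^*(-K^*y)$. Moreover $f \circ K$ is convex, so $\Phi(\cdot;K)$ inherits $\mu_g$-strong convexity from $g$; likewise $y \mapsto g^*(-K^*y)$ is convex, so $\Psi$ is $\mu_{f^*}$-strongly convex.

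Second, I would invoke standard Fenchel--Rockafellar duality together with the optimality system \eqref{Arrow-Hurwicz} to identify the saddle point: $\hat{x}$ is the minimizer of \eqref{piggy_lower} and $\hat{y}$ is the maximizer of \eqref{piggy_lower_dual}, hence the minimizer of $\Psi$. By the strong convexity established above, both minimizers are unique and equal to $\hat{x} = \arg\min_x \Phi(x;K)$ and $\hat{y} = \arg\min_y \Psi(y)$. Applying \Cref{lm:aposteriori_sc} to $\Phi(\cdot;K)$ with constant $\mu_g$ yields \eqref{bound_1}, and applying it to $\Psi$ with constant $\mu_{f^*}$ yields \eqref{bound_2}.

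The work here is mostly bookkeeping rather than a genuine obstacle: the one point to handle carefully is tying the saddle point $(\hat{x},\hat{y})$ of \eqref{saddle} to the separate primal and dual minimizers, i.e.\ checking that the differentiability coming from strong convexity of $g$ and $f^*$ makes the chain-rule gradient formulas valid globally and that \eqref{Arrow-Hurwicz} is indeed equivalent to simultaneous primal/dual optimality. Once that identification is in place, the two bounds follow immediately from \Cref{lm:aposteriori_sc}.
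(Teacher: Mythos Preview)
Your proposal is correct and follows essentially the same approach as the paper: identify $\hat{x}$ and $\hat{y}$ as the minimizers of the $\mu_g$-strongly convex primal objective $f(K\cdot)+g(\cdot)$ and the $\mu_{f^*}$-strongly convex dual objective $f^*(\cdot)+g^*(-K^*\cdot)$, then apply \Cref{lm:aposteriori_sc} to each. You are slightly more explicit than the paper about the differentiability coming from strong convexity of the conjugates and about the saddle-point/primal-dual correspondence, but the argument is the same.
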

\begin{proof}
Since $\hat{x}$ is the minimizer of \eqref{piggy_lower}, and from \Cref{ass_strong-cvx} the primal problem's objective $f(K\cdot)+ g(\cdot)$ is $\mu_g$-strongly convex, utilizing \Cref{lm:aposteriori_sc} one can derive \eqref{bound_1}. On the other hand, we note that the dual problem \eqref{piggy_lower_dual} is equivalent to :
\begin{equation}\label{piggy_lower_dual_min}
    \min_{y\in \mathcal{Y}}f^*(y)+g^*(-K^*y),
\end{equation}
where the objective $f^*(\cdot)+g^*(-K^*\cdot)$ is $\mu_{f^*}$-strongly convex. Since $\hat{y}$ is the minimizer of \eqref{piggy_lower_dual_min}, utilizing \Cref{lm:aposteriori_sc} again yields the bound \eqref{bound_2} as required.
\end{proof} 

\subsection{Piggyback Algorithm}
To compute the hypergradient $\nabla \mathcal{L}(K)$, two variations of the piggyback algorithm were introduced in \cite{AntoninChambolle2021}. By looking at \eqref{hypergrad_piggy}, computing the hypergradient involves solving two saddle-point problems: \eqref{saddle} and \eqref{saddle_adjoint}, respectively. A well-suited algorithm to solve each of these problems is the Primal-Dual hybrid gradient (PDHG\footnote{\rv{The authors in \cite{PDHG_category} referred to this algorithm as the modified primal-dual hybrid gradient (PDHGM).}}) \cite[Algorithm 3]{chambolle2011first}. Since the adjoint problem \eqref{saddle_adjoint} depends on the solution of \eqref{saddle}, two different approaches can be taken. One strategy is to solve \eqref{saddle} and \eqref{saddle_adjoint} alternatively, where each iterations includes a step of PDHG applied to \eqref{saddle}, followed by plugging the result in \eqref{saddle_adjoint} and performing one step of inexact PDHG applied to the adjoint problem, as detailed in \cite{rasch2020inexact}. 

Alternatively, PDHG can be used to solve the saddle-point problem \eqref{saddle} first. Since the exact solution $(\hat{x}(K), \hat{y}(K))$ of \eqref{saddle} is not attainable in finite time, an approximate saddle-point $(\tilde{x}(K), \tilde{y}(K))$ is computed instead. With this approximation, we can then solve the saddle-point problem below:
\begin{equation}\tag{ASI}\label{saddle_adjoint_Inexact}
\begin{aligned}
    \min_{X\in \mathcal{X}} \max_{Y \in \mathcal{Y}} \ &\langle KX, Y \rangle + \frac{1}{2}\langle \nabla^2g(\tilde{x}(K))X , X \rangle \\ 
    &- \frac{1}{2}\langle \nabla^2f^*(\tilde{y}(K))Y, Y \rangle\\
    &+\langle \nabla\ell_1(\tilde{x}(K)),X\rangle+\langle \nabla\ell_2(\tilde{y}(K)),Y\rangle,
\end{aligned}
\end{equation} corresponding to the adjoint variables, given an inexact saddle-point of \eqref{saddle}, using PDHG to find the saddle-point $(\bar{X}(K), \bar{Y}(K))$ of \eqref{saddle_adjoint_Inexact}. Finally, the approximate hypergradient can be calculated as 
\begin{equation}
    z(K) \coloneqq \tilde{y}(K)\otimes \bar{X}(K) + \bar{Y}(K) \otimes \tilde{x}(K).
\end{equation} 

\begin{algorithm*}[h]
\caption{Piggyback Algorithm (suggested in \cite{Bogensperger2022}). Hyperparameter: $\nu \leq \frac{2\sqrt{\mu_g \mu_{f^*}}}{\|K\|}$. }
\label{alg:piggyback_2}
\begin{algorithmic}[1]
\State Input: $(x_0, y_0), (X_0, Y_0) \in \mathcal{X} \times \mathcal{Y}$.
\State{Set \rv{step-sizes} $\tau = \frac{\nu}{2\mu_g}$, $\sigma = \frac{\nu}{2\mu_{f^*}}$, and \rv{extrapolation factor }$\theta \in [\frac{1}{1+\nu},1]$.}
\State{\rv{Apply \cite[Algorithm 3]{chambolle2011first} to \eqref{saddle} with $\tau,\sigma,\theta$ chosen above to find approximate solution $(\tilde{x}(K), \tilde{y}(K))$.}}\label{inner_step}
\State{\rv{Apply \cite[Algorithm 3]{chambolle2011first} to \eqref{saddle_adjoint_Inexact} with $\tau,\sigma,\theta$ chosen above to find approximate solution $(\tilde{X}(K), \tilde{Y}(K))$.}}\label{outer_step}
\State{Update the hypergradient approximation $z(K) = \tilde{y}(K)\otimes \tilde{X}(K) + \tilde{Y}(K) \otimes \tilde{x}(K)$.}
\end{algorithmic}
\end{algorithm*}


\rv{To ensure a well-defined hypergradient that can be approximated with theoretical guarantees, similar to \cite{Bogensperger2022}, we impose additional regularity assumption on $g^*$ and $f$.}
\rv{
\begin{definition}
A $C^2$ function $f$ is said to be $C^{2,\alpha}$ if for some $L_{\nabla^2f}>0$, the following holds for all $x_1,x_2$:
\begin{equation}
    \|\nabla^2f(x_1)-\nabla^2f(x_2)\|\leq L_{\nabla^2f}\|x_1-x_2\|^\alpha
\end{equation}
\end{definition}
}
\rv{
\begin{assumption}\label{ass_pigy_2}
$f$ and $g^*$ are locally $C^{2,1}$.\footnote{For simplicity, we consider locally $C^{2,1}$ functions, but the analysis can be generalized to locally $C^{2,\alpha}$ functions for $\alpha\in(0,1]$, similar to \cite{Bogensperger2022}.}
\end{assumption}
}
It was shown in \cite[Theorem 2.2]{Bogensperger2022} that under \Cref{ass_strong-cvx}, \ref{ass_pigy_1}, and \ref{ass_pigy_2}, \rvv{and following the parameter selection condition on the step-sizes $\tau,\sigma$, and extrapolation factor $\theta$ of PDHG \cite[Algorithm 3]{chambolle2011first}}, this alternative piggyback algorithm converges linearly to $(\hat{x}(K), \hat{y}(K), \hat{X}(K), \hat{Y}(K))$, and thus to the hypergradient $\nabla \mathcal{L}(K)$.

This alternative approach is outlined in \Cref{alg:piggyback_2}. The error bound for the resulting approximate hypergradient is given in the following theorem.
\begin{theorem}\label{th_piggy}(\cite[Theorem 2.3]{Bogensperger2022})
    Under \Cref{ass_strong-cvx}, \ref{ass_pigy_1}, and \ref{ass_pigy_2}, let  $\nu \leq \frac{2\sqrt{\mu_g \mu_{f^*}}}{\|K\|}$, $\tau = \frac{\nu}{2\mu_g}$, $\sigma = \frac{\nu}{2\mu_{f^*}}$, $\theta \in [\frac{1}{1+\nu},1]$ chosen as in \cite[Algorithm 3]{chambolle2011first}. The iterates of \Cref{alg:piggyback_2} satisfy
\begin{multline*}
\|z(K) - \nabla \mathcal{L}(K)\| \leq C \Big( \big(\|\tilde{x}(K) - \hat{x}(K)\| \\ + \|\tilde{y}(K) - \hat{y}(K)\|\big)^\alpha 
+ \omega^{\frac{k}{2}} \Big),
\end{multline*}
for some constant $C>0$, rate $\omega \leq \theta$ and $\alpha \in (0,1]$.
\end{theorem}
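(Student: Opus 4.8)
The plan is to decompose the hypergradient error into two sources: the error in the adjoint solution caused by \emph{truncating} the inner PDHG run on \eqref{saddle_adjoint_Inexact} after $k$ iterations, and the error caused by \emph{perturbing the coefficients} of the adjoint saddle point, i.e.\ evaluating $\nabla^2 g,\nabla^2 f^*,\nabla\ell_1,\nabla\ell_2$ at the inexact pair $(\tilde x(K),\tilde y(K))$ rather than at $(\hat x(K),\hat y(K))$. Let $(\bar X(K),\bar Y(K))$ denote the exact solution of \eqref{saddle_adjoint_Inexact} and set $z_1(K) := \tilde y(K)\otimes\bar X(K)+\bar Y(K)\otimes\tilde x(K)$. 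Since $\otimes$ is bilinear and $\|a\otimes b\|\le\|a\|\,\|b\|$, writing $z(K)-\nabla\mathcal L(K)=\bigl(z(K)-z_1(K)\bigr)+\bigl(z_1(K)-\nabla\mathcal L(K)\bigr)$ and expanding each bracket with the triangle inequality bounds $\|z(K)-\nabla\mathcal L(K)\|$ by a sum of products, in each of which one factor is one of the ``error'' quantities $\|\tilde X(K)-\bar X(K)\|$, $\|\tilde Y(K)-\bar Y(K)\|$, $\|\bar X(K)-\hat X(K)\|$, $\|\bar Y(K)-\hat Y(K)\|$, $\|\tilde x(K)-\hat x(K)\|$, $\|\tilde y(K)-\hat y(K)\|$, and the other is a norm of one of $\tilde x(K),\tilde y(K),\hat y(K),\hat Y(K),\bar X(K)$. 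It therefore suffices to bound these six error quantities and to show the accompanying magnitude factors are uniformly bounded.

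For the truncation quantities I would invoke linear convergence of PDHG applied to \eqref{saddle_adjoint_Inexact}. Because $g$ and $f^*$ are $\mu_g$- and $\mu_{f^*}$-strongly convex (\Cref{ass_pigy_2}), one has $\nabla^2 g(\tilde x(K))\succeq\mu_g I$ and $\nabla^2 f^*(\tilde y(K))\succeq\mu_{f^*}I$, so \eqref{saddle_adjoint_Inexact} is a $\mu_g$-strongly-convex / $\mu_{f^*}$-strongly-concave quadratic saddle point with the same linear coupling $K$ as \eqref{saddle}. Hence the step-sizes and extrapolation parameter of \Cref{alg:piggyback_2}, which were chosen precisely for these moduli following \cite[Algorithm~3]{chambolle2011first}, make the PDHG iterates converge linearly with some rate $\omega\le\theta$, yielding $\|\tilde X(K)-\bar X(K)\|+\|\tilde Y(K)-\bar Y(K)\|\le C\,\omega^{k/2}$.

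For the perturbation quantities I would compare the first-order optimality systems of \eqref{saddle_adjoint} and \eqref{saddle_adjoint_Inexact}. Inserting the exact adjoint solution $(\hat X(K),\hat Y(K))$ into the optimality residual of the perturbed problem \eqref{saddle_adjoint_Inexact} and subtracting the vanishing residual of \eqref{saddle_adjoint}, the primal block reduces to $\bigl[\nabla^2 g(\tilde x(K))-\nabla^2 g(\hat x(K))\bigr]\hat X(K)+\bigl[\nabla\ell_1(\tilde x(K))-\nabla\ell_1(\hat x(K))\bigr]$, with an analogous expression in the dual block. Since $(\tilde x(K),\tilde y(K))\to(\hat x(K),\hat y(K))$, both pairs eventually lie in a fixed compact neighborhood on which the local $C^{2,1}$ (more generally $C^{2,\alpha}$) regularity of $g^*,f$ from \Cref{ass_pigy_2}, transferred to $\nabla^2 g,\nabla^2 f^*$ through conjugation, yields uniform $\alpha$-H\"older constants, while $\nabla\ell_1,\nabla\ell_2$ are globally Lipschitz (\Cref{ass_pigy_1}); the residual is therefore bounded by $C\bigl(\|\tilde x(K)-\hat x(K)\|^\alpha+\|\tilde y(K)-\hat y(K)\|^\alpha\bigr)$. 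Because the perturbed adjoint objective keeps strong-convexity modulus $\mu_g$ in $X$ and strong-concavity modulus $\mu_{f^*}$ in $Y$, applying \Cref{lm:aposteriori_sc} exactly as in the proof of \Cref{lm:aposteriori_sc_saddle} converts this residual bound into $\|\bar X(K)-\hat X(K)\|+\|\bar Y(K)-\hat Y(K)\|\le C\bigl(\|\tilde x(K)-\hat x(K)\|+\|\tilde y(K)-\hat y(K)\|\bigr)^\alpha$.

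It remains to control the magnitude factors and assemble the pieces. The quantities $\hat x(K),\hat y(K),\hat X(K),\hat Y(K)$ are fixed; $\tilde x(K),\tilde y(K)$ converge linearly to $\hat x(K),\hat y(K)$ and are thus bounded; and $\bar X(K),\bar Y(K)$ lie within $O\bigl((\cdot)^\alpha\bigr)$ of $\hat X(K),\hat Y(K)$ by the previous paragraph, hence are bounded too. Collecting the estimates, absorbing all constants into a single $C$, and using $t\le t^\alpha$ for $t\le 1$ together with $a^\alpha+b^\alpha\le 2(a+b)^\alpha$ for $\alpha\in(0,1]$, one arrives at the stated bound. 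I expect the perturbation step to be the main obstacle: it requires establishing that the unique solution of the quadratic adjoint saddle point is $\alpha$-H\"older stable in its coefficients with an explicit constant, which in turn requires first localizing $(\tilde x(K),\tilde y(K))$ into a compact set where the merely \emph{local} H\"older regularity of $\nabla^2 g,\nabla^2 f^*$ becomes uniform, and carefully verifying that perturbing the Hessians does not degrade the strong-convexity moduli, so that \Cref{lm:aposteriori_sc} still applies with the same constants $\mu_g,\mu_{f^*}$.
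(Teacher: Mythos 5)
The paper does not prove this theorem itself---it is imported verbatim from \cite[Theorem 2.3]{Bogensperger2022}---and your sketch reproduces essentially the argument of that reference: split the error into the PDHG truncation error on \eqref{saddle_adjoint_Inexact} (giving the linear rate $\omega^{k/2}$, since the quadratic adjoint problem inherits the moduli $\mu_g,\mu_{f^*}$ and the same coupling $K$, so the same step-sizes apply) and the H\"older-stable perturbation of the adjoint saddle point induced by replacing $(\hat{x}(K),\hat{y}(K))$ with $(\tilde{x}(K),\tilde{y}(K))$, then assemble via bilinearity of $\otimes$ and the triangle inequality. Your perturbation step is moreover exactly the computation this paper later carries out explicitly (with $\alpha=1$, using \Cref{lm:aposteriori_sc} and the conjugate-Hessian Lipschitz transfer of \Cref{locally_lip_hess}) in the proof of \Cref{apos_piggy_theorem}, so the approach is sound and consistent with both sources.
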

Although \Cref{th_piggy} shows the iterates of \Cref{alg:piggyback_2} converge to the hypergradient $\nabla\mathcal{L}(K)$, the practical challenges still persist. Running too many iterations in steps \ref{inner_step} and \ref{outer_step} of \Cref{alg:piggyback_2} can lead to an impractically slow performance. Conversely, using too few iterations may cause a loss of convergence in the gradient descent method for the upper-level problem, as it would rely on the approximated hypergradient $\tilde{y}(K) \otimes \tilde{X}(K)+\tilde{Y}(K) \otimes \tilde{x}(K)$ for solving \eqref{upperpiggy}. To tackle this issue, we establish an a-posteriori analysis to quantify the error in the computed hypergradient, allowing us to derive a reliable stopping criterion for steps \ref{inner_step} and \ref{outer_step} of \Cref{alg:piggyback_2}.

\section{Adaptive Bilevel Learning}
\subsection{Inexact Piggyback Algorithm and A-Posteriori Error Analysis}
Since our focus is on the a-posteriori error bound of the hypergradient evaluated at a given operator $K$, we simplify the notation in this section by dropping the dependence on $K$ for brevity. Specifically, we denote $\hat{x}(K), \hat{y}(K), \tilde{x}(K), \tilde{y}(K)$ by $\hat{x}, \hat{y}, \tilde{x}, \tilde{y}$. Similarly, we denote $\hat{X}(K), \hat{Y}(K), \bar{X}(K), \bar{Y}(K)$ by $\hat{X}, \hat{Y}, \bar{X}, \bar{Y}$.

\begin{lemma}\label{lm:aposteriori_sc_saddle_adjoint}
    Let \Cref{ass_strong-cvx} and \ref{ass_pigy_1} hold. For a given operator $K$, and consider the unique saddle-point $(\bar{X}, \bar{Y})$ of \eqref{saddle_adjoint_Inexact}, we have that, for all $(X,Y) \in \mathcal{X} \times \mathcal{Y}$,
    \begin{subequations}\label{bound_X_Y}
    \begin{align}
         &\|X - \bar{X}\|\leq\delta_1(X,\tilde{x},\tilde{y})\notag \\ 
         &:=\frac{\|\bar{B}_1X + \nabla\ell_1(\tilde{x})+ K^*\nabla^2f^*(\tilde{y})^{-1}\nabla\ell_2(\tilde{y})\|}{\mu_g}\label{bound_1_ad}, \\
         &\|Y - \bar{Y}\|\leq\delta_2(Y,\tilde{x}.\tilde{y})\notag \\ &:= \frac{\|\bar{B}_2Y + K\nabla^2g(\tilde{x})^{-1}\nabla\ell_1(\tilde{x})-\nabla\ell_2 (\tilde{y})\|}{\mu_{f^*}}\label{bound_2_ad},
    \end{align}
\end{subequations}
where $\bar{B}_1=\nabla^2g(\tilde{x})+K^*\nabla^2f^*(\tilde{y})^{-1}K$, and\\ $\bar{B}_2=K\nabla^2g(\tilde{x})^{-1}K^*+\nabla^2f^*(\tilde{y})$.
\end{lemma}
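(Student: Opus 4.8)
The plan is to mimic the proof of \Cref{lm:aposteriori_sc_saddle}: reduce each of the two inequalities to an application of \Cref{lm:aposteriori_sc} to a suitably eliminated, strongly convex objective. The problem \eqref{saddle_adjoint_Inexact} is a quadratic saddle-point problem of the form $\min_X\max_Y \langle KX,Y\rangle+\tfrac12\langle \bar A_1 X,X\rangle-\tfrac12\langle \bar A_2 Y,Y\rangle+\langle \nabla\ell_1(\tilde x),X\rangle+\langle\nabla\ell_2(\tilde y),Y\rangle$, where $\bar A_1=\nabla^2 g(\tilde x)\succeq\mu_g I$ and $\bar A_2=\nabla^2 f^*(\tilde y)\succeq\mu_{f^*}I$ by \Cref{ass_pigy_2}. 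First I would note that, since $\bar A_2$ is symmetric positive definite, the inner maximization over $Y$ is solved explicitly by $Y=\bar A_2^{-1}(KX+\nabla\ell_2(\tilde y))$; substituting this back yields an equivalent unconstrained minimization over $X$ of
\[
\psi_1(X):=\tfrac12\langle \bar A_1 X,X\rangle+\tfrac12\langle \bar A_2^{-1}(KX+\nabla\ell_2(\tilde y)),KX+\nabla\ell_2(\tilde y)\rangle+\langle\nabla\ell_1(\tilde x),X\rangle,
\]
whose Hessian is $\bar A_1+K^*\bar A_2^{-1}K=\bar B_1$, which is $\mu_g$-strongly convex because $\bar A_1\succeq\mu_g I$ and the remaining term is positive semidefinite. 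Its gradient is $\nabla\psi_1(X)=\bar B_1 X+\nabla\ell_1(\tilde x)+K^*\bar A_2^{-1}\nabla\ell_2(\tilde y)$, and $\bar X$ is exactly its minimizer (the $X$-component of the unique saddle point, which exists and is unique under \Cref{ass_pigy_2} as already invoked for \eqref{saddle_adjoint}). Then \Cref{lm:aposteriori_sc} applied to $\psi_1$ with $\mu=\mu_g$ gives precisely \eqref{bound_1_ad}.

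For \eqref{bound_2_ad} I would proceed symmetrically: since $\bar A_1=\nabla^2 g(\tilde x)$ is symmetric positive definite, the inner minimization over $X$ (for fixed $Y$) of $\tfrac12\langle \bar A_1 X,X\rangle+\langle K^*Y+\nabla\ell_1(\tilde x),X\rangle$ is solved by $X=-\bar A_1^{-1}(K^*Y+\nabla\ell_1(\tilde x))$. Substituting back and negating, the $Y$-problem becomes minimization of
\[
\psi_2(Y):=\tfrac12\langle \bar A_2 Y,Y\rangle+\tfrac12\langle \bar A_1^{-1}(K^*Y+\nabla\ell_1(\tilde x)),K^*Y+\nabla\ell_1(\tilde x)\rangle-\langle\nabla\ell_2(\tilde y),Y\rangle,
\]
with Hessian $\bar A_2+K\bar A_1^{-1}K^*=\bar B_2\succeq\mu_{f^*}I$, hence $\mu_{f^*}$-strongly convex, and gradient $\nabla\psi_2(Y)=\bar B_2 Y+K\bar A_1^{-1}\nabla\ell_1(\tilde x)-\nabla\ell_2(\tilde y)$. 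Since $\bar Y$ minimizes $\psi_2$, \Cref{lm:aposteriori_sc} with $\mu=\mu_{f^*}$ yields \eqref{bound_2_ad}.

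The routine parts are the explicit partial minimization/maximization and the bookkeeping of which sign and which inverse appears where; these are just linear algebra once the quadratic structure is written out. The one point that needs care — and which I expect to be the main obstacle — is justifying that the reduced problems are genuinely the ones whose minimizers are $\bar X$ and $\bar Y$: i.e. that eliminating one variable from the saddle-point problem preserves the saddle point, and that the resulting $\psi_1,\psi_2$ are not merely convex but strongly convex with the stated constants (this uses that $\nabla^2 g(\tilde x)$ and $\nabla^2 f^*(\tilde y)$ are each bounded below, from \Cref{ass_pigy_2}, so that the added Schur-complement terms $K^*\bar A_2^{-1}K$ and $K\bar A_1^{-1}K^*$ are positive semidefinite and do not spoil strong convexity). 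Once this is in place, the two bounds follow immediately from \Cref{lm:aposteriori_sc}.
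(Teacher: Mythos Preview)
Your argument is correct and coincides with the paper's own proof: the paper simply writes down the primal and dual problems of \eqref{saddle_adjoint_Inexact} (exactly your $\psi_1$ and $\psi_2$, obtained by eliminating $Y$ and $X$ respectively), observes they are $\mu_g$- and $\mu_{f^*}$-strongly convex, and applies \Cref{lm:aposteriori_sc}. Your explicit gradient computations and the check that the Schur-complement terms are only positive semidefinite (so the strong-convexity constants are $\mu_g$ and $\mu_{f^*}$) are precisely the details the paper suppresses.
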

\begin{proof}
    The primal and dual problem correspond to the inexact adjoint saddle-point problem \eqref{saddle_adjoint_Inexact} can be written as: 
    \begin{multline}\label{primal_ASI}
         \min_{X\in \mathcal{X}} \frac{1}{2}\langle \bar{B}_1X , X \rangle +\langle\nabla\ell_1 (\tilde{x}), X\rangle\\+\langle K^*\nabla^2f^*(\tilde{y})^{-1}\nabla\ell_2(\tilde{y}),X\rangle\\
         +\frac{1}{2}\langle\nabla\ell_2(\tilde{y}),\nabla^2f^*(\tilde{y})^{-1}\nabla\ell_2(\tilde{y})\rangle,
     \end{multline}
    \begin{multline}\label{dual_ASI}
    \max_{Y\in\mathcal{Y}} -\frac{1}{2}\langle \bar{B}_2Y,Y\rangle-\langle K\nabla^2g(\tilde{x})^{-1}\nabla\ell_1(\tilde{x}),Y\rangle\\
+\langle\nabla\ell_2(\tilde{y}),Y\rangle-\frac{1}{2}\langle\nabla\ell_1(\tilde{x}),\nabla^2g(\tilde{x})^{-1}\nabla\ell_1(\tilde{x})\rangle.        
    \end{multline}
Using \Cref{ass_strong-cvx}, \ref{ass_pigy_1}, and \Cref{lm:aposteriori_sc} we derive \eqref{bound_1_ad} and \eqref{bound_2_ad} as required.
\end{proof}
\begin{lemma}\label{locally_lip_hess}
    Let $g$ be $\mu_g$-strongly convex and $L_{\nabla g}$-smooth, and let $g^*$ be $C^{2,1}$. Then $\nabla^2g(x)$ is Lipschitz continuous with constant $L_{\nabla^2g^*}(L_{\nabla g})^{3}$. 
\end{lemma}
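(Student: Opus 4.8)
The plan is to exploit the Fenchel duality relations between $g$ and $g^*$ together with the resolvent identity $A^{-1}-B^{-1}=A^{-1}(B-A)B^{-1}$ valid for invertible symmetric matrices.

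\emph{Step 1: a pointwise inversion formula for the Hessian.} Since $g$ is $\mu_g$-strongly convex and $L_{\nabla g}$-smooth, standard conjugacy facts give that $g^*$ is differentiable with $\nabla g^*=(\nabla g)^{-1}$, that $g^*$ is $\tfrac{1}{L_{\nabla g}}$-strongly convex, and that $g^*$ is $\tfrac{1}{\mu_g}$-smooth. In particular $\nabla^2 g^*(y)\succeq \tfrac{1}{L_{\nabla g}}I$ for all $y$, so $\nabla^2 g^*(y)$ is invertible with $\big\|[\nabla^2 g^*(y)]^{-1}\big\|\le L_{\nabla g}$. Differentiating the identity $\nabla g^*(\nabla g(x))=x$ and invoking the inverse function theorem (legitimate because $g^*$ is $C^2$ with everywhere-invertible Hessian), one obtains that $g$ is twice differentiable with
\[
  \nabla^2 g(x)=\big[\nabla^2 g^*(\nabla g(x))\big]^{-1}.
\]

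\emph{Step 2: the Lipschitz estimate.} Fix $x,x'\in\mathcal X$ and set $y=\nabla g(x)$, $y'=\nabla g(x')$. Using Step 1 and the resolvent identity with $A=\nabla^2 g^*(y)$ and $B=\nabla^2 g^*(y')$,
\[
  \|\nabla^2 g(x)-\nabla^2 g(x')\|\le \|A^{-1}\|\,\|B-A\|\,\|B^{-1}\|.
\]
I then bound the three factors separately: $\|A^{-1}\|,\|B^{-1}\|\le L_{\nabla g}$ by Step 1; $\|B-A\|=\|\nabla^2 g^*(y')-\nabla^2 g^*(y)\|\le L_{\nabla^2 g^*}\|y-y'\|$ because $g^*$ is $C^{2,1}$; and $\|y-y'\|=\|\nabla g(x)-\nabla g(x')\|\le L_{\nabla g}\|x-x'\|$ because $g$ is $L_{\nabla g}$-smooth. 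Multiplying these yields $\|\nabla^2 g(x)-\nabla^2 g(x')\|\le L_{\nabla^2 g^*}(L_{\nabla g})^3\|x-x'\|$, which is the claim.

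\emph{Main obstacle.} The computation itself is routine; the only delicate point is the rigorous justification of Step 1, namely that $g$ is genuinely twice differentiable and that the inversion formula $\nabla^2 g(x)=[\nabla^2 g^*(\nabla g(x))]^{-1}$ holds. This rests on combining the $C^{2,1}$ regularity of $g^*$ with the invertibility of $\nabla^2 g^*$ supplied by the $L_{\nabla g}$-smoothness of $g$, through the inverse function theorem; in the ``locally $C^{2,1}$'' setting of \Cref{ass_pigy_2} one carries this out on suitable neighbourhoods of the points $\nabla g(x)$, exactly in the spirit of \cite{Bogensperger2022}.
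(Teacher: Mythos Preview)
Your proof is correct and essentially identical to the paper's: both use the resolvent identity $A^{-1}-B^{-1}=A^{-1}(B-A)B^{-1}$ (the paper writes it as $\nabla^2g(x_1)-\nabla^2g(x_2)=\nabla^2g(x_1)\big(\nabla^2g(x_2)^{-1}-\nabla^2g(x_1)^{-1}\big)\nabla^2g(x_2)$), combine it with the Fenchel relation $\nabla^2 g(x)=[\nabla^2 g^*(\nabla g(x))]^{-1}$, and then chain the three Lipschitz/operator-norm bounds. The only difference is that where you spell out the inverse-function-theorem justification in Step~1, the paper simply invokes \cite[Theorem~1]{zhou2018fenchel}.
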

\begin{proof}
For $x_1, x_2$, let $y_1=\nabla g(x_1), y_2=\nabla g(x_2)$, we have:
\begin{align*}
&\|\nabla^2g(x_1)-\nabla^2g(x_2)\|\\
&=\|\nabla^2g(x_1)(\nabla^2g(x_2)^{-1}-\nabla^2g(x_1)^{-1})\nabla^2g(x_2)\| \\ 
&\leq\|\nabla^2g(x_1)\| \cdot \|\nabla^2g^*(y_2)-\nabla^2g^*(y_1)\|\cdot\|\nabla^2g(x_2)\|\\
&\leq(L_{\nabla g})^2\|\nabla^2g^*(y_1)-\nabla^2g^*(y_2)\|.
\end{align*}
\rvv{For the first inequality, we utilized the fact that $\nabla^2g(x_i)^{-1}=\nabla^2g^*(y_i)$ for $i=1,2$}, and the last inequality is due to \cite[Theorem 1]{zhou2018fenchel} and the $L_{\nabla g}$ smoothness of $g$.
Since $g^*$ is $C^{2,1}$ and $g$ is $L_{\nabla g}$ smooth, we have:
\begin{multline*}
\|\nabla^2g(x_1)-\nabla^2g(x_2)\|\\
\shoveleft\leq(L_{\nabla g})^2\|\nabla^2g^*(y_1)-\nabla^2g^*(y_2)\|\\
\shoveleft\leq L_{\nabla^2g^*}(L_{\nabla g})^2\|\nabla g(x_1)-\nabla g(x_2)\|\\
\shoveleft\leq L_{\nabla^2g^*}(L_{\nabla g})^3\|x_1-x_2\|.\\
\end{multline*}
\end{proof}
\begin{theorem}\label{apos_piggy_theorem}
   Suppose \Cref{ass_strong-cvx}, \ref{ass_pigy_1}, and \ref{ass_pigy_2} hold. Moreover, let $g$ and $f^*$ be $L_{\nabla g}$ and $L_{\nabla f^*}$ smooth, respectively. For $\epsilon^x,\epsilon^y,\delta^X,\delta^Y>0$, if $\|\tilde{x} - \hat{x}\|\leq \epsilon^x$, $\|\tilde{y} - \hat{y}\|\leq \epsilon^y$. Let $(\tilde{X},\tilde{Y})\in\mathcal{X}\times\mathcal{Y}$ such that  
   \begin{subequations}
       \begin{align}
           \|\tilde{X}-\bar{X}\|&\leq \delta_1(\tilde{X},\tilde{x},\tilde{y})\leq\delta^X\\
           \|\tilde{Y}-\bar{Y}\|&\leq \delta_2(\tilde{Y},\tilde{x},\tilde{y})\leq\delta^Y,
       \end{align}
   \end{subequations}
   then $(\tilde{X},\tilde{Y})$ satisfies:
\begin{equation}\label{bound_adjoint_piggy_1}
     \|\tilde{X} - \hat{X}\| \leq  C^X_1\epsilon^x+C^X_2\epsilon^y+\delta^X,
\end{equation}
\begin{equation}\label{bound_adjoint_piggy_2}
     \|\tilde{Y} - \hat{Y}\| \leq  C^Y_1\epsilon^x+C^Y_2\epsilon^y+\delta^Y.
\end{equation}
For 
\begin{equation}
    z :=\tilde{y}\otimes \tilde{X} + \tilde{Y} \otimes \tilde{x},
\end{equation}
we have:
\begin{equation}\label{error_bound_piggy}
\begin{aligned}
&\|z - \nabla\mathcal{L}(K)\|\leq
(C^Y_1\|\tilde{x}\|+C^X_1\|\tilde{y}\|+\|\tilde{Y}\|)\epsilon^x\\
&+(C^Y_2\|\tilde{x}\|+C^X_2\|\tilde{y}\|+\|\tilde{X}\|)\epsilon^y+\|\tilde{y}\|\delta^X+\|\tilde{x}\|\delta^Y\\
&+C^Y_1(\epsilon^x)^2+C^X_2(\epsilon^y)^2+\delta^X\epsilon^y+\delta^Y\epsilon^x
\end{aligned}
\end{equation}
where 
\begin{subequations}
\begin{align}
&C^X_1:=\frac{L_{\nabla^2g^*}(L_{\nabla g})^{3}\|\tilde{X}\|+L_{1}}{\mu_g}\label{CX1}\\
&C^X_2:=\notag\\
&\frac{L_{\nabla^2f}L_{\nabla f^*}\|K\|(\|K\|\|\tilde{X}\|+\|\nabla\ell_2(\tilde{y})\|)}{\mu_g}+\frac{L_{2}\|K\|}{\mu_g\mu_{f^*}}\label{CX2}\\
&C^Y_1:=\notag\\
&\frac{L_{\nabla^2g^*}L_{\nabla g}\|K\|(\|K\|\|\tilde{Y}\|+\|\nabla\ell_1(\tilde{x})\|)}{\mu_{f^*}}+\frac{L_{1}\|K\|}{\mu_g\mu_{f^*}}\label{CY1}\\
&C^Y_2:=\frac{L_{\nabla^2f}(L_{\nabla f^*})^{3}\|\tilde{Y}\|+L_{2}}{\mu_{f^*}}
\label{CY2}
\end{align}
\end{subequations}
\end{theorem}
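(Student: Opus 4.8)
The plan is to bound $\|\tilde{X}-\hat{X}\|$ and $\|\tilde{Y}-\hat{Y}\|$ first, and then derive the hypergradient bound \eqref{error_bound_piggy} from these by a tensor-product triangle inequality. For the adjoint error, the key observation is that $\hat{X}$ is the unique saddle point of the \emph{exact} adjoint problem \eqref{saddle_adjoint} (equivalently, the minimizer of the exact analogue of \eqref{primal_ASI}), while $\bar X$ is the minimizer of the inexact primal problem \eqref{primal_ASI}. Using \Cref{lm:aposteriori_sc_saddle_adjoint} with the choice $X = \tilde X$, we already get $\|\tilde X - \bar X\| \le \delta^X$, so by the triangle inequality it remains to bound $\|\bar X - \hat X\|$. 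For this I would apply \Cref{lm:aposteriori_sc} to the \emph{exact} adjoint primal objective, whose minimizer is $\hat X$ and whose strong-convexity constant is still $\mu_g$ (since the Hessian block is $\hat B_1 = \nabla^2 g(\hat x) + K^*\nabla^2 f^*(\hat y)^{-1} K \succeq \mu_g I$), evaluated at $\bar X$. The resulting gradient norm $\|\nabla(\text{exact})(\bar X)\|$ equals the difference between the exact and inexact gradients at $\bar X$ (since $\nabla(\text{inexact})(\bar X)=0$), which is a sum of terms of the form $[\nabla^2 g(\hat x)-\nabla^2 g(\tilde x)]\bar X$, $[\nabla^2 f^*(\hat y)^{-1}-\nabla^2 f^*(\tilde y)^{-1}]$ type expressions conjugated by $K$, and $\nabla\ell_1(\hat x)-\nabla\ell_1(\tilde x)$, etc. Each of these is controlled by $\epsilon^x$ or $\epsilon^y$ using: \Cref{locally_lip_hess} for the Lipschitz continuity of $\nabla^2 g$ (constant $L_{\nabla^2 g^*}(L_{\nabla g})^3$) and the analogous statement for $\nabla^2 f^*$; the identity $A^{-1}-B^{-1} = A^{-1}(B-A)B^{-1}$ together with $\|\nabla^2 f^*(\cdot)^{-1}\| \le 1/\mu_{f^*}$ for the inverse-Hessian differences; and the $L_1,L_2$-smoothness of $\ell_1,\ell_2$ from \Cref{ass_pigy_1}. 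Collecting coefficients gives exactly $C^X_1\epsilon^x + C^X_2\epsilon^y$, hence \eqref{bound_adjoint_piggy_1}; the bound \eqref{bound_adjoint_piggy_2} is symmetric, using the dual problem \eqref{dual_ASI} and swapping the roles of $g\leftrightarrow f^*$, $x\leftrightarrow y$, $\mu_g\leftrightarrow\mu_{f^*}$.

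For the hypergradient bound, recall $\nabla\mathcal{L}(K) = \hat y\otimes\hat X + \hat Y\otimes\hat x$ from \eqref{hypergrad_piggy} and $z = \tilde y\otimes\tilde X + \tilde Y\otimes\tilde x$. I would write the difference as a telescoping sum,
\begin{align*}
z - \nabla\mathcal{L}(K) &= (\tilde y - \hat y)\otimes\tilde X + \hat y\otimes(\tilde X - \hat X)\\
&\quad + (\tilde Y - \hat Y)\otimes\tilde x + \hat Y\otimes(\tilde x - \hat x),
\end{align*}
and bound each term using $\|a\otimes b\| \le \|a\|\|b\|$. This produces the terms $\|\tilde X\|\epsilon^y$, $\|\hat y\|\|\tilde X-\hat X\|$, $\|\tilde x\|\delta^Y$-type and $\|\hat Y\|\epsilon^x$-type contributions. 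To match the stated form I then substitute \eqref{bound_adjoint_piggy_1}--\eqref{bound_adjoint_piggy_2} for $\|\tilde X-\hat X\|$, $\|\tilde Y-\hat Y\|$, and replace $\|\hat y\|\le\|\tilde y\|+\epsilon^y$, $\|\hat x\|\le\|\tilde x\|+\epsilon^x$, $\|\hat Y\|\le\|\tilde Y\|+\delta^Y$, $\|\hat X\|\le\|\tilde X\|+\delta^X$ (the latter two via \Cref{lm:aposteriori_sc_saddle_adjoint} and the triangle inequality with $\bar X,\bar Y$). Expanding and grouping the linear terms gives the coefficients $(C^Y_1\|\tilde x\|+C^X_1\|\tilde y\|+\|\tilde Y\|)$ on $\epsilon^x$ and $(C^Y_2\|\tilde x\|+C^X_2\|\tilde y\|+\|\tilde X\|)$ on $\epsilon^y$, plus $\|\tilde y\|\delta^X + \|\tilde x\|\delta^Y$; the leftover cross terms $\epsilon^x\epsilon^y$, $(\epsilon^x)^2$, $(\epsilon^y)^2$, $\delta^X\epsilon^y$, $\delta^Y\epsilon^x$ are exactly the second-order remainder in \eqref{error_bound_piggy} (keeping only the dominant such terms, e.g. $C^Y_1(\epsilon^x)^2$ and $C^X_2(\epsilon^y)^2$, and absorbing the rest).

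The main obstacle I anticipate is the careful bookkeeping in the adjoint-error step: one must correctly expand $\nabla(\text{exact adjoint primal})(\bar X) - \nabla(\text{inexact adjoint primal})(\bar X)$, which contains \emph{four} distinct difference terms (two from the $\nabla^2 g$ and $\nabla^2 f^*$ blocks, one from the $K^*\nabla^2 f^*(\cdot)^{-1}K$ composite block, one from the $\nabla\ell_i$ linear terms), and bound the inverse-Hessian difference $K^*[\nabla^2 f^*(\tilde y)^{-1}-\nabla^2 f^*(\hat y)^{-1}]K\bar X$ cleanly — this is where \Cref{locally_lip_hess}, the resolvent identity, and the norm bounds $\|\nabla^2 f^*(\cdot)^{-1}\|\le 1/\mu_{f^*}$, $\|\nabla^2 f^*(\cdot)\|\le L_{\nabla f^*}$ all interact, and where the factor $\|\nabla\ell_2(\tilde y)\|$ enters $C^X_2$. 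A secondary subtlety is that \eqref{bound_adjoint_piggy_1}--\eqref{bound_adjoint_piggy_2} are stated with $\delta^X$, $\delta^Y$ appearing without extra multiplicative constants, which requires using the a-posteriori bound of \Cref{lm:aposteriori_sc_saddle_adjoint} at the specific point $X=\tilde X$ rather than a generic triangle-inequality detour; getting the constant exactly $1$ there is the reason the lemma is phrased as a pointwise bound $\|\tilde X - \bar X\|\le\delta_1(\tilde X,\tilde x,\tilde y)$.
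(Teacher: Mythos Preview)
Your overall strategy is sound and close to the paper's, but two technical choices diverge and would not produce the exact constants $C^X_i, C^Y_i$ stated in the theorem.

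First, the paper does \emph{not} split $\|\tilde X-\hat X\|\le\|\tilde X-\bar X\|+\|\bar X-\hat X\|$. Instead it applies \Cref{lm:aposteriori_sc} to the exact adjoint primal objective directly at the point $\tilde X$, obtaining
\[
\|\tilde X-\hat X\|\le\tfrac{1}{\mu_g}\bigl\|\hat B_1\tilde X+\nabla\ell_1(\hat x)+K^*\nabla^2f^*(\hat y)^{-1}\nabla\ell_2(\hat y)\bigr\|,
\]
and then adds and subtracts the inexact quantities \emph{inside} the norm. The ``inexact'' piece is exactly $\mu_g\,\delta_1(\tilde X,\tilde x,\tilde y)\le\mu_g\delta^X$, while the remaining difference terms all carry the factor $\|\tilde X\|$. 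Your detour via $\bar X$ would instead produce difference terms carrying $\|\bar X\|$, which must then be replaced by $\|\tilde X\|+\delta^X$, introducing extra $\delta^X\epsilon^x$, $\delta^X\epsilon^y$ cross terms that are absent from \eqref{bound_adjoint_piggy_1}.

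Second, for the inverse-Hessian difference the paper does not use the resolvent identity. It uses the conjugate relation $\nabla^2f^*(y)^{-1}=\nabla^2f(\nabla f^*(y))$ (from $\nabla f^*=(\nabla f)^{-1}$), so that
\[
\|\nabla^2f^*(\hat y)^{-1}-\nabla^2f^*(\tilde y)^{-1}\|\le L_{\nabla^2f}\,\|\nabla f^*(\hat y)-\nabla f^*(\tilde y)\|\le L_{\nabla^2f}L_{\nabla f^*}\,\epsilon^y.
\]
Your resolvent bound $\|A^{-1}-B^{-1}\|\le\|A^{-1}\|\,\|A-B\|\,\|B^{-1}\|$ together with $\|\nabla^2f^*(\cdot)^{-1}\|\le 1/\mu_{f^*}$ and \Cref{locally_lip_hess} for the Lipschitz constant of $\nabla^2f^*$ gives the looser factor $L_{\nabla^2f}(L_{\nabla f^*})^3/\mu_{f^*}^2$, which does not match the $L_{\nabla^2f}L_{\nabla f^*}$ appearing in $C^X_2$. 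The same conjugate-Hessian identity is what makes the bound on $\|\hat B_1-\bar B_1\|$ come out with the factor $L_{\nabla^2f}L_{\nabla f^*}\|K\|^2$.

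For the hypergradient part your telescoping is equivalent to the paper's (just the other grouping of $a\otimes b - a'\otimes b'$), but your replacement $\|\hat Y\|\le\|\tilde Y\|+\delta^Y$ is not justified: $\delta^Y$ only controls $\|\tilde Y-\bar Y\|$, not $\|\tilde Y-\hat Y\|$. The paper simply uses $\|\hat Y\|\le\|\tilde Y\|+\|\tilde Y-\hat Y\|$ and then substitutes \eqref{bound_adjoint_piggy_2}; this is precisely where the second-order terms $C^Y_1(\epsilon^x)^2$, $\delta^Y\epsilon^x$, etc.\ in \eqref{error_bound_piggy} originate.
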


\begin{proof}
Taking similar steps as \rvv{\Cref{lm:aposteriori_sc_saddle_adjoint}} on \eqref{saddle_adjoint} and defining
$$\hat{B}_1=\nabla^2g(\hat{x})+K^*\nabla^2f^*(\hat{y})^{-1}K,$$ $$\hat{B}_2=K\nabla^2g(\hat{x})^{-1}K^*+\nabla^2f^*(\hat{y}),$$
we derive:
\begin{equation}\label{aposXk_exact}
     \begin{aligned}
     &\|\tilde{X} - \hat{X}\| \\ 
         &\leq \frac{\|\hat{B}_1\tilde{X} + \nabla\ell_1(\hat{x}) + K^*\nabla^2f^*(\hat{y})^{-1}\nabla\ell_2(\hat{y})\|}{\mu_g},    
     \end{aligned}   
\end{equation}
and 
\begin{equation}\label{aposYk_exact}
     \begin{aligned}
         &\|\tilde{Y} - \hat{Y}\| \\ 
         &\leq \frac{\|\hat{B}_2\tilde{Y} + K\nabla^2g(\hat{x})^{-1}\nabla\ell_1(\hat{x})-\nabla\ell_2 (\hat{y})\|}{\mu_{f^*}},
     \end{aligned}
\end{equation}
where $(\hat{X}, \hat{Y})$ is the saddle-point of \eqref{saddle_adjoint}. \rvv{Now, from the RHS of \eqref{aposXk_exact}, by adding and subtracting $\tilde{B}\tilde{X}+\nabla_1(\tilde{x})$, we have:}
\begin{equation}\label{aposXk_ieq1}
\begin{aligned}
&\|\hat{B}_1\tilde{X} + \nabla\ell_1 (\hat{x})+K^*\nabla^2f^*(\hat{y})^{-1}\nabla\ell_2(\hat{y})\|\\
&=\|\bar{B}_1\tilde{X}+\nabla\ell_1 (\tilde{x})+K^*\nabla^2f^*(\tilde{y})^{-1}\nabla\ell_2(\tilde{y})\\
&+(\hat{B}_1-\bar{B}_1)\tilde{X}+\nabla\ell_1 (\hat{x})-\nabla\ell_1(\tilde{x})\\
&+K^*\nabla^2f^*(\hat{y})^{-1}\nabla\ell_2(\hat{y})-K^*\nabla^2f^*(\tilde{y})^{-1}\nabla\ell_2(\tilde{y})\|\\ &\leq\|\bar{B}_1\tilde{X}+\nabla\ell_1(\tilde{x})+K^*\nabla^2f^*(\tilde{y})^{-1}\nabla\ell_2(\tilde{y})\|\\
&+\|\hat{B}_1-\bar{B}_1\|\|\tilde{X}\|+L_{1}\epsilon^x\\
&+\|K^*\nabla^2f^*(\hat{y})^{-1}\nabla\ell_2(\hat{y})-K^*\nabla^2f^*(\tilde{y})^{-1}\nabla\ell_2(\tilde{y})\|
\end{aligned}
\end{equation}
where the inequality is due to the $L_{1}$-smoothness of $\ell_1$ \rvv{and the triangle inequality.} Note that we have:
\begin{equation}\label{aposXk_ieq2}
    \begin{aligned}
        &\|\hat{B}_1-\bar{B}_1\|\\
        &=\|\nabla^2g(\hat{x})-\nabla^2g(\tilde{x})\\
        &+K^*(\nabla^2f(\nabla f^*(\hat{y}))-\nabla^2f(\nabla f^*(\tilde{y})))K\|\\
        &\leq L_{\nabla^2g^*}(L_{\nabla g})^{3}\|\hat{x}-\tilde{x}\|\\
        &+L_{\nabla^2f}\|K\|^2\|\nabla f^*(\hat{y})-\nabla f^*(\tilde{y})\|\\
        &\leq L_{\nabla^2g^*}(L_{\nabla g})^{3}\epsilon^{x}+L_{\nabla^2f}L_{\nabla f^*}\|K\|^2\|\hat{y}-\tilde{y}\|\\
        &=L_{\nabla^2g^*}(L_{\nabla g})^{3}\epsilon^{x}+L_{\nabla^2f}L_{\nabla f^*}\|K\|^2\epsilon^y,
    \end{aligned}
\end{equation}
\rvv{where the first inequality is due to \Cref{locally_lip_hess} and the Lipschitz condition on $\nabla^2f$, and the last inequality is due to \Cref{ass_strong-cvx} and \Cref{remark1}.}
We note that:
\begin{equation}\label{aposXk_ieq3}
\begin{aligned}
&\|K^*\nabla^2f^*(\hat{y})^{-1}\nabla\ell_2(\hat{y})-K^*\nabla^2f^*(\tilde{y})^{-1}\nabla\ell_2(\tilde{y})\|\\
&=\|K^*\nabla^2f^*(\hat{y})^{-1}\nabla\ell_2(\hat{y})-K^*\nabla^2f^*(\hat{y})^{-1}\nabla\ell_2(\tilde{y})\\
&+K^*\nabla^2f^*(\hat{y})^{-1}\nabla\ell_2(\tilde{y})-K^*\nabla^2f^*(\tilde{y})^{-1}\nabla\ell_2(\tilde{y})\|\\
&\leq\|K\|\|\nabla^2f^*(\hat{y})^{-1}\|\|\nabla\ell_2(\hat{y})-\nabla\ell_2(\tilde{y})\|\\
&+\|K\|\|\nabla^2f^*(\hat{y})^{-1}-\nabla^2f^*(\tilde{y})^{-1}\|\|\nabla\ell_2(\tilde{y})\|
\end{aligned}
\end{equation}
where the last inequality again utilizes \cite[Theorem 1]{zhou2018fenchel}. Moreover, \rvv{since $\nabla^2f^*(y)^{-1}=\nabla^2f(\nabla f^*(y))$} for all $y$, we have:
\begin{equation}\label{aposXk_ieq4}
\begin{aligned}
&\|\nabla^2f^*(\hat{y})^{-1}-\nabla^2f^*(\tilde{y})^{-1}\|\\
&=\|\nabla^2f(\nabla f^*(\hat{y}))-\nabla^2f(\nabla f^*(\tilde{y}))\|\\
&\leq L_{\nabla^2f}\|\nabla f^*(\hat{y})-\nabla f^*(\tilde{y})\|\\
&\leq L_{\nabla^2f}L_{\nabla f^*}\|\hat{y}-\tilde{y}\|.
\end{aligned}
\end{equation}
Combining \eqref{aposXk_ieq1},\eqref{aposXk_ieq2},\eqref{aposXk_ieq3}, \eqref{aposXk_ieq4}, this implies
\begin{equation}\label{aposX0_exact_final}     \|\tilde{X} - \hat{X}\| \leq  C^X_1\epsilon^x+C^X_2\epsilon^y+\delta^X\end{equation}
where $C^X_1,C^X_2$ are as defined in \eqref{CX1},\eqref{CX2}.
Similarly, taking the same steps on the RHS of \eqref{aposYk_exact} yields
\begin{equation}\label{aposY0_exact_final}
    \|\tilde{Y} - \hat{Y}\| \leq  C^Y_1\epsilon^x+C^Y_2\epsilon^y+\delta^Y,
\end{equation}
with $C^Y_1,C^Y_2$ as \eqref{CY1},\eqref{CY2}.
Now, for the hypergradient $z$, we know
\begin{multline*}
    \|z - \nabla\mathcal{L}(K)\| \leq \|\tilde{y}\|\|\tilde{X} - \hat{X}\| + \|\tilde{y}-\hat{y}\|\|\hat{X}\| \\
    + \|\tilde{x}\|\|\tilde{Y} - \hat{Y}\| + \|\tilde{x}-\hat{x}\|\|\hat{Y}\|.
\end{multline*}
Adding and subtracting $\tilde{X}$ to $\hat{X}$ in the second term and $\tilde{Y}$ to $\hat{Y}$ in fourth term of the RHS above, applying triangle inequality, employing \eqref{aposX0_exact_final} and \eqref{aposY0_exact_final}, we derive the bound \eqref{error_bound_piggy} as required.
\end{proof}

\rvv{
\begin{remark}
For a general $\ell$, $\nabla\ell_1$ and $\nabla\ell_2$ will be replaced by $\nabla_x\ell$ and $\nabla_y\ell$ in \Cref{lm:aposteriori_sc_saddle_adjoint} and \Cref{apos_piggy_theorem} respectively. In the proof of \Cref{apos_piggy_theorem}, error bounds on the gradients of upper-level loss such as $\|\nabla_1(\hat{x})-\nabla_1(\tilde{x})\|$ can be bounded by $L(\epsilon^x+\epsilon^y)$ instead.
\end{remark}}

\rv{Note that \Cref{apos_piggy_theorem}, and in particular \eqref{error_bound_piggy}, provides an a-posteriori and fully computable error bound for the inexact hypergradient calculated through primal-dual style differentiation, which can be used in adaptive bilevel methods such as \cite{salehi2025adaptivelyinexactfirstordermethod}.} Now, utilizing \Cref{lm:aposteriori_sc} and \Cref{apos_piggy_theorem}, we propose an inexact variation of Piggyback algorithm with controllable error on the lower-level iterates as well as the hypergradient, which is outlined in \Cref{alg:piggyback_inexact}.

\begin{algorithm*}[h]
\caption{Inexact Piggyback Algorithm. Hyperparameter: $\nu \leq \frac{2\sqrt{\mu_g \mu_{f^*}}}{\|K\|}$. }
\label{alg:piggyback_inexact}
\begin{algorithmic}[1]
\State Input: $(x_0, y_0), (X_0, Y_0) \in \mathcal{X} \times \mathcal{Y}$, accuracies $\epsilon^x, \epsilon^y, \delta^X, \delta^Y\geq 0$
\Function{InexactPiggyback}{$K,\epsilon^x, \epsilon^y, \delta^X, \delta^Y$}
\State{Set $\tau = \frac{\nu}{2\mu_g}$, $\sigma = \frac{\nu}{2\mu_{f^*}}$, and $\theta \in [\frac{1}{1+\nu},1]$.}
\State{Solve \eqref{saddle} to find $(\tilde{x}, \tilde{y})$, where $\|\tilde{x} - \hat{x}\| \leq \epsilon^x$ and $\|\tilde{y} - \hat{y}\| \leq \epsilon^y$, utilizing \eqref{bound_1} and \eqref{bound_2}}
\State{Solve \eqref{saddle_adjoint_Inexact} 
to find $(\tilde{X}, \tilde{Y})$, where $\|\tilde{X} - \bar{X}\| \leq \delta^X$ and $\|\tilde{Y} - \bar{Y}\| \leq \delta^Y$, utilizing \eqref{bound_adjoint_piggy_1} and \eqref{bound_adjoint_piggy_2}.}
\State\Return $z = \tilde{y} \otimes \tilde{X} + \tilde{Y} \otimes \tilde{x}$
\EndFunction
\end{algorithmic}
\end{algorithm*}

\subsection{Method of Adaptive Inexact Descent (MAID)}
Given the approximate hypergradient provided by \Cref{alg:piggyback_inexact}, we employ the adaptive backtracking line search scheme introduced in \cite{salehi2025adaptivelyinexactfirstordermethod} to find suitable upper-level step-sizes, and to adaptively adjust the tolerances $\epsilon^x, \epsilon^y, \delta^X, \delta^Y$ required for calculating an approximate hypergradient.
\rv[final]{We denote the $t$-th iterate of the linear learnable parameters by $K_t$.}
The following lemma gives a sufficient decrease condition which we will deploy as the line search rule for upper-level iterations.
\begin{lemma}\label{inexact_bt_lemma}
    \cite[\rv{Lemma 3.5}]{salehi2025adaptivelyinexactfirstordermethod} Let \Cref{ass_pigy_1} hold, $\lambda \in \mathbb{R}$, and suppose that the upper-level loss $\ell$ is convex. For each upper-level iteration $t= 0,1, \dots$, set $u_t \coloneqq (\tilde{x}(K_t), \tilde{y}(K_t))$, $\bar{\epsilon} = \max \{\epsilon^x_t, \epsilon^x_{t+1}, \epsilon^y_t, \epsilon^y_{t+1}\}$, and denoting 
    \begin{multline}\label{psi}
    \psi(\rvv{\alpha_t}) \coloneqq \ell(u_{t+1}) + \| \nabla \ell(u_{t+1}) \| \bar{\epsilon}\\
    + \frac{L_{1}+L_{2}}{2} \bar{\epsilon}^2 - \ell(u_{t}) + \| \nabla \ell(u_{t}) \| \bar{\epsilon} + \lambda \alpha_t \|z_t\|^2.
\end{multline} If the line search condition
$
        \psi(\alpha_t) \leq 0
$
     is satisfied, then the sufficient descent condition $\mathcal{L}(K_{t+1})-\mathcal{L}(K_{t})\leq -\lambda \alpha_t \|z_t\|^2$ holds.  
\end{lemma}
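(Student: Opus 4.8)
The plan is to establish the two inequalities in Lemma~\ref{inexact_bt_lemma} in sequence: first a descent-type estimate for the \emph{exact} loss $\mathcal{L}$ along the upper-level step, and then a chain of $\epsilon$-perturbation bounds that convert quantities evaluated at the exact lower-level solutions $(\hat{x}(K_t),\hat{y}(K_t))$ into quantities evaluated at the computed approximations $u_t=(\tilde{x}(K_t),\tilde{y}(K_t))$, which is what $\psi(\alpha_t)$ is built from.

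First I would recall that, by Assumption~\ref{ass_pigy_1}, $\nabla\ell=(\nabla\ell_1,\nabla\ell_2)$ is Lipschitz with constant $L_1+L_2$ (in the sense that each block is $L_i$-Lipschitz), so $\ell$ obeys the descent lemma
\begin{equation*}
\ell(v) \le \ell(w) + \langle \nabla\ell(w), v-w\rangle + \tfrac{L_1+L_2}{2}\|v-w\|^2
\end{equation*}
for all $v,w\in\mathcal{X}\times\mathcal{Y}$. Applying this with $w$ the exact saddle-point tuple at $K_{t+1}$ and $v$ at $K_t$, together with $|\ell(\cdot)-\ell(\text{exact})|$ and $\|\nabla\ell(\cdot)-\nabla\ell(\text{exact})\|$ controlled via the $L_i$-Lipschitz continuity of the gradients and the a~posteriori bounds $\|\tilde{x}(K_t)-\hat{x}(K_t)\|\le\epsilon^x_t$, $\|\tilde{y}(K_t)-\hat{y}(K_t)\|\le\epsilon^y_t$ (from Lemma~\ref{lm:aposteriori_sc_saddle}, which is the mechanism enforced in Algorithm~\ref{alg:piggyback_inexact}), I would bound $\mathcal{L}(K_{t+1})-\mathcal{L}(K_t)=\ell(\hat{x}(K_{t+1}),\hat{y}(K_{t+1}))-\ell(\hat{x}(K_t),\hat{y}(K_t))$ above by $\ell(u_{t+1})-\ell(u_t)$ plus correction terms of the form $\|\nabla\ell(u_{t+1})\|\bar{\epsilon}+\|\nabla\ell(u_t)\|\bar{\epsilon}+\tfrac{L_1+L_2}{2}\bar{\epsilon}^2$, where $\bar\epsilon=\max\{\epsilon^x_t,\epsilon^x_{t+1},\epsilon^y_t,\epsilon^y_{t+1}\}$ absorbs all four tolerances. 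Reading off, this upper bound is exactly $\psi(\alpha_t)+\ell(u_t)-\|\nabla\ell(u_t)\|\bar\epsilon-\mathcal{L}(K_t)-\ldots$; more cleanly, rearranging the definition of $\psi$ shows that $\psi(\alpha_t)\le 0$ is precisely the statement that this upper bound is $\le -\lambda\alpha_t\|z_t\|^2$, hence $\mathcal{L}(K_{t+1})-\mathcal{L}(K_t)\le -\lambda\alpha_t\|z_t\|^2$.

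The hard part will be matching the exact-loss perturbation terms to the specific algebraic form of $\psi$ in \eqref{psi}: one has to be careful that $\mathcal{L}(K_{t+1})$ and $\mathcal{L}(K_t)$ each get perturbed by the correct one-sided bound (an upper bound on $\mathcal{L}(K_{t+1})$ in terms of $\ell(u_{t+1})$, and a \emph{lower} bound on $\mathcal{L}(K_t)$ in terms of $\ell(u_t)$), so that the signs of the $\|\nabla\ell\|\bar\epsilon$ terms come out as written—$+$ on the $u_{t+1}$ side and the somewhat unusual arrangement on the $u_t$ side. The convexity hypothesis on $\ell$ enters here only to keep $\ell\ge 0$-type arguments or to sharpen $|\ell(\tilde u)-\ell(\hat u)|\le\|\nabla\ell(\tilde u)\|\,\|\tilde u-\hat u\|$ into the one-sided form needed; I would invoke it exactly as in \cite[Lemma 3.4]{salehi2024adaptively}. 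Since the statement is quoted verbatim from that reference, the proof reduces to transcribing their argument with $u_t$ identified as the piggyback iterate pair and $\bar\epsilon$ as the joint lower-level tolerance, so no genuinely new estimate is required beyond the bookkeeping of which $\epsilon$ controls which term.
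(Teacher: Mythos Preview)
The paper does not supply its own proof of this lemma; it simply cites \cite[Lemma~3.4]{salehi2024adaptively}. Your outline is the correct one and matches that reference: the descent lemma from $(L_1+L_2)$-smoothness gives $\mathcal{L}(K_{t+1})=\ell(\hat u_{t+1})\le \ell(u_{t+1})+\|\nabla\ell(u_{t+1})\|\bar\epsilon+\tfrac{L_1+L_2}{2}\bar\epsilon^2$, convexity gives $\mathcal{L}(K_t)=\ell(\hat u_t)\ge \ell(u_t)-\|\nabla\ell(u_t)\|\bar\epsilon$, and subtracting yields $\mathcal{L}(K_{t+1})-\mathcal{L}(K_t)\le \psi(\alpha_t)-\lambda\alpha_t\|z_t\|^2$.

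One small slip in your write-up: the descent lemma is not applied ``with $w$ the exact tuple at $K_{t+1}$ and $v$ at $K_t$''. Each of the two bounds is obtained at a \emph{single} $K$, comparing the exact saddle-point $\hat u$ to its computed approximation $u$; there is no cross-iterate application of smoothness. You correct yourself later in the paragraph when you describe the one-sided upper/lower bounds, so the logic is ultimately right, but the opening description should be tightened.
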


Considering the bilevel problem \eqref{piggy_bilevel} and utilizing \Cref{inexact_bt_lemma}, the adaptive backtracking line search with restart, outlined in \Cref{alg:MAID}, is employed to solve the upper-level problem \eqref{upperpiggy}. \rv{Note that, as detailed in the steps of \Cref{alg:MAID}, \Cref{inexact_bt_lemma} serves as an accuracy-dependent line search condition to adjust the required accuracy for progressing in the upper-level optimization iterations. When this condition is satisfied, it indicates that the accuracy is adequate for a sufficient decrease and suggests that the lower-level tolerance can be enlarged to reduce the computational cost of the next upper-level iteration. Conversely, if the condition does not hold and the maximum number of line search steps is reached, it indicates that the accuracy was insufficient. In this case, we proceed by shrinking the lower-level tolerance and restarting the computation of the lower-level solution, as well as rechecking the line search condition. }

\rv{Given that \Cref{lm:aposteriori_sc_saddle_adjoint} and \Cref{apos_piggy_theorem} provide a practical and computable framework, as outlined in \Cref{alg:piggyback_inexact}, for computing an inexact hypergradient required in \Cref{alg:MAID}, and that this aligns with the regularity assumptions needed in \Cref{inexact_bt_lemma} as well as the convergence requirements of \Cref{alg:MAID}, the convergence of our bilevel scheme follows from the theorem below.
}\rv{
\begin{theorem}\cite[Theorem 3.19]{salehi2025adaptivelyinexactfirstordermethod}
    Under \Cref{ass_pigy_1} and \ref{ass_pigy_2}, the iterates $K_t$ of \Cref{alg:MAID} satisfy
    $$\lim_{t\to\infty} \|\nabla\mathcal{L}(K_t)\| = 0.$$
\end{theorem}}
\begin{algorithm*}[h]
\caption{\cite[Algorithm 3.1]{salehi2025adaptivelyinexactfirstordermethod} Method of Adaptive Inexact Descent (MAID). Hyperparameters: $\underline{\rho} \in (0,1)$ and \rv{$\overline{\rho}>1$} control the reduction and increase of the step-size $\alpha_k$, respectively; $\underline{\nu} \in (0,1)$ and \rv{$\overline{\nu}>1$} govern the reduction and increase of accuracies $\delta_k$ and $\epsilon_k$; $\max_\text{BT} \in \mathbb{N}$ is the maximum number of backtracking iterations.}\label{alg:MAID}
\begin{algorithmic}[1]
\State Input $K_{0} \in \mathbb{R}^{C\times N}$, accuracies $\epsilon^x_0, \epsilon^y_0, \delta^X_0, \delta^Y_0>0$, step-size $\alpha_{0} >0$.
\For{$t=0, 1, \dots$}
\For{$j = \max_\text{BT}, \max_\text{BT}+1, \dots$ }\label{bt_loop}
\State{$z_t \leftarrow$ I\footnotesize{NEXACT}\normalsize P\footnotesize{IGGYBACK}\normalsize($K_t, \epsilon^x_t,\epsilon^y_t, \delta^X_t, \delta^Y_t$)} \label{updated_descend_direction}
\For{$i=0,1,\dots,j-1$}\label{inner_loop}
\If{inexact sufficient decrease $\psi(\alpha_t) \leq 0$ holds}\Comment{Lemma \ref{inexact_bt_lemma}}
\State{Go to line \ref{gd_update_step}}\Comment{Backtracking Successful}
\EndIf
\State{$\alpha_{t} \leftarrow \underline{\rho}\alpha_t$}
\Comment{Adjust the starting step-size}
\EndFor
\State $\epsilon^x_t, \epsilon^y_t \leftarrow \underline{\nu} \epsilon^x_t, \underline{\nu}\epsilon^y_t \label{BT_decrease} $  \Comment{Backtracking Failed and needs higher accuracy}
\State $\delta^X_t, \delta^Y_t \leftarrow \underline{\nu} \delta^X_t, \underline{\nu}\delta^Y_t$  \label{BT_decrease_delta}
\EndFor
\State{$K_{t+1} \leftarrow K_t - \alpha_{t} z_t$}\Comment{Gradient descent update}\label{gd_update_step}
\State{$\epsilon^x_{t+1}, \epsilon^y_{t+1} \leftarrow \overline{\nu} \epsilon^x_{t}, \overline{\nu} \epsilon^y_{t}$ }\label{increase_epsilon_k}\Comment{Increasing $\epsilon_t$}
\State{$\delta^X_{t+1}, \delta^Y_{t+1} \leftarrow \overline{\nu} \delta^X_{t}, \overline{\nu} \delta^Y_{t}$ }\label{increase_delta_k}\Comment{Increasing $\delta_t$}
\State{$\alpha_{t+1} \leftarrow \overline{\rho} \alpha_{t}$}\label{increase_beta_k}\Comment{Increasing $\alpha_t$}

\EndFor
\end{algorithmic}
\end{algorithm*}
\section{Numerical Experiments}
\label{sec:experiments}
We now provide numerical experiments to support the theoretical contributions of this work. Section~\ref{sec:experiments:tv} focuses on the problem of learning an improved discretization for the total variation (TV). Using the adaptive algorithm presented in Algorithm~\ref{alg:MAID}, we demonstrate its potential advantages compared to the native piggyback method without adaptive step-sizes and tolerances, which relies on careful tuning of the learning rate $\alpha$ and accuracy parameters $\delta$ and $\epsilon$. Subsequently, we take this a step further to demonstrate the applicability of our algorithm in bilevel settings where the piggyback algorithm is used to learn more expressive regularizers. More precisely, in Section~\ref{sec:experiments:icnn}, an ICNN-based regularizer is learned using the proposed algorithm for CT reconstruction, yielding competitive results. The accompanying source code will be made available upon publication.

\subsection{Learning the Discretization of Total Variation}\label{sec:experiments:tv}
In the work of~\cite{condat2017discrete}, the need for a new discretization for the total variation was identified. The aim was to enforce common pixel grid locations of the employed vector fields which in turn promotes rotational invariance that is not granted with the standard discretization. This was later extended to a general (consistent) setting using learnable interpolation filters for the total variation and its second-order extension~\cite{AntoninChambolle2021,bogensperger2023learned}. This is treated in the setting of standard linear inverse problems, where an image $x\in\mathbb{R}^{M\times N}$ is sought to be reconstructed. 
\rv[final]{In this experiment, we consider the task of image denoising. Given a noisy observation $u$, we consider the 1-strongly convex $L^2$ data fidelity $\tfrac{1}{2}\|x-u\|^2_2$.}

Following~\cite{condat2017discrete,AntoninChambolle2021}, which suggests to take the constraint on the interpolated dual variable $Ky$ instead of the dual variable $y$. 
The dual form of total variation can be represented as:
\begin{equation}\label{eq:tv_dual}
    \mathrm{TV}(x) = \sup_y \langle Dx,y\rangle:\Vert Ky\Vert_{Z}^* \leq 1,
\end{equation}
where $\Vert \cdot\Vert_Z:= \Vert \cdot \Vert_{2,1,1}$ for $q \in \mathbb{R}^{n\times 2 \times M \times N}$, which is the absolute sum of its $n$ components of the 2-norm of its two components. Using the linear operator $K$ this allows to interpolate the components of $y\in \mathbb{R}^{2\times M\times N}$, which are located on staggered pixel grids due to the employed finite differences $D$. Note that Neumann boundary conditions are used here to obtain resulting pixel grids of the same spatial size. Note that the constraint essentially implies that the operator $K$ encompasses a sparse coding for $Dx$.
The corresponding primal problem of~\eqref{eq:tv_dual} is then given by
\begin{equation}\label{eq:tv_primal}
    \mathrm{TV}(x) = \min_{q:K^* q=Dx} \Vert q \Vert_{Z},
\end{equation}


Combining~\eqref{eq:tv_primal} with the \rv{$L^2$ data fidelity}, this yields the subsequent saddle-point problem with regularization parameter $\lambda \in \mathbb{R}^+$: 
\rv{
\begin{equation}\label{eq:tv_sp_standard}
\min_{x,q} \max_y \tfrac{1}{2}\|x-y\|^2_2 + \lambda \Vert q\Vert_Z + \langle Dx-K^* q,y\rangle.
\end{equation}
}
Given the exact solution $(\hat{x},\hat{y})$ of~\eqref{eq:tv_sp_standard}, its corresponding adjoint saddle-point problem is then given by
\rv{
\begin{multline}
    \min_{X,Q} \max_Y \langle DX - K^* Q, Y \rangle + \tfrac{1}{2} \langle X, X \rangle \\
    + \tfrac{1}{2} \langle \nabla^2 \lambda \| \hat{q} \|_{Z}, Q \rangle + \langle \nabla \ell(\hat{x}, x^*), X \rangle.
\end{multline}
}
\rv[final]{In~\eqref{eq:tv_sp_standard}, both the non-smooth absolute function $\Vert \cdot \Vert_{2,1,1}$ and the $L^2$ data term jointly constitute the original function $g$ defined in the primal objective in~\eqref{piggy_lower} of the lower-level problem. Note, however, that the non-smooth term $\Vert \cdot \Vert_{2,1,1}$ does not fully satisfy the regularity assumptions imposed on $g$ in~\eqref{piggy_lower}.}
While this would typically require smoothing, we observe that the primal-dual style differentiation approach in bilevel learning performs robustly even in this less regular setting, similar to the findings in~\cite{Bogensperger2022}. Therefore, we directly work with~\eqref{eq:tv_sp_standard}.

For a given set of corresponding ground truth images and corrupted observations $(x_i^*,u_i)_i$, the standard bilevel setting in~\eqref{piggy_bilevel} can be employed, which given the saddle-point structure of the underlying lower-level problem can be approached using Algorithm~\ref{alg:MAID}. 
The experiments were conducted for the inverse problem of image denoising for a randomly sampled train subset of the BSDS train data set~\cite{arbelaez2010contour} with 32 images of size $128 \times 128$. To generate the corrupted observations for the image denoising task, zero-mean additive Gaussian noise $\sim \mathcal{N}(0,\sigma^2)$ with $\sigma=25.5$ was used \rvv{for image intensities scaled between 0 and 255}. \rvv{To evaluate the learned filters, another set of 32 test images of the same size and with the noise level was generated.}

\begin{figure*}[t!]
\begin{center}
\begin{minipage}[c]{0.36\linewidth}
    \subfloat[\footnotesize Learned filters with adaptive setting (top) and non-adaptive (bottom).]{\label{fig:learned_tv_filters}\includegraphics[width=\textwidth]{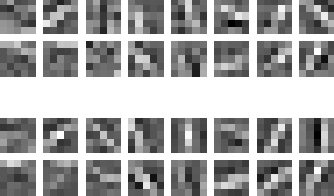}}
\end{minipage} \hspace{2pt}
\begin{minipage}[c]{0.58\linewidth}
    \subfloat[\centering \footnotesize Loss vs. computational budget of lower-level iterations]{\label{fig:tv_budget}\includegraphics[width=\textwidth]{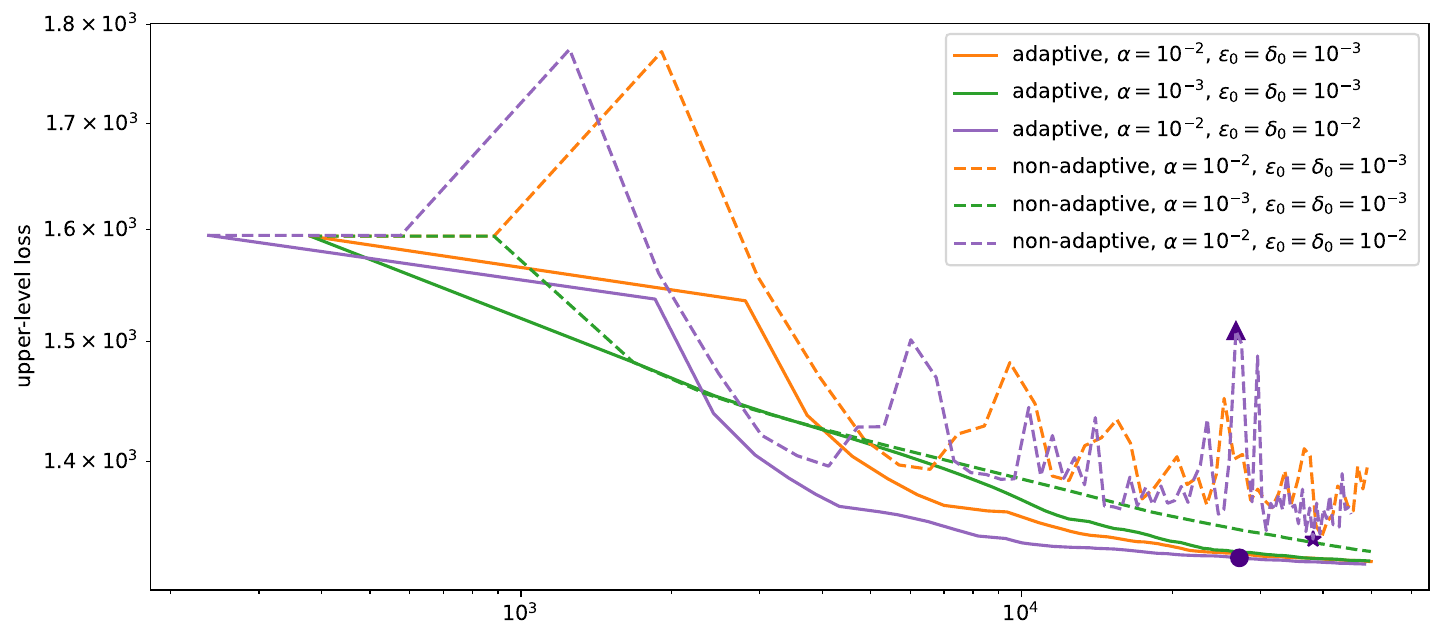}} 
\end{minipage}
\caption{Learned filters with adaptive and non-adaptive setting (a) and comparison of the upper-level loss depending on the computational budget which is comprised of the total number of upper- and lower-level iterations (b). For one experimental setting, the potential influence of the oscillating behaviour in the non-adaptive case is investigated by examining reconstruction results for different time-points of the oscillating curve (cf. \indigoTriangle vs. \indigoStar) and the corresponding results using Algorithm~\ref{alg:MAID} (\indigoCircle), see also Figure~\ref{fig:denoising_tv_oscillations}.}
\label{denoising_tv_budget}
\end{center}
\end{figure*}

The linear operator $K$ was heuristically chosen to consist of $n=8$ filters with filter kernels $5 \times 5$. \rvv{Across all experiments, filter weights were initialized with He‐uniform initialization~\cite{he2015delving} and a constant random seed.} The parameters in Algorithm~\ref{alg:MAID} were set to $\underline{\nu}=0.5$, $\bar{\nu}=1.05$, $\underline{\rho}=0.5$, $\bar{\rho}=\tfrac{10}{9}$ following~\cite{salehi2025adaptivelyinexactfirstordermethod}. For comparison between the adaptive and non-adaptive settings, where the latter corresponding to the native inexact version of piggyback, the initial values for $\epsilon_0$, $\delta_0$ and $\alpha_0$ were kept the same across both cases. However, in the non-adaptive version, these parameters were fixed throughout the algorithm. Figure~\ref{fig:learned_tv_filters} shows the learned filters with the adaptive Algorithm~\ref{alg:MAID} and without for $\epsilon_0=10^{-3}$, $\delta_0=10^{-3}$ and $\alpha_0=10^{-2}$, showing that both settings seem to capture very related orientations. While clearly the quantitatively learned filters are very similar, it is of higher interest to investigate the resulting implications on the used computational budget in terms of required lower-level iterations.  
Therefore, Figure~\ref{fig:tv_budget} shows the resulting upper-level loss depending on the required total computational budget in all standard and adjoint primal-dual algorithms for different experimental settings. The benefit of adaptively choosing the step-size can clearly be seen as the resulting upper-level loss decreases smoothly with increasing computational budget in the lower-level solver. 
\rv[final]{On the other hand, the dashed curves show scenarios with the same initial accuracies and learning rate $\alpha$ as in the adaptive setting, but these parameters are now kept constant throughout the learning algorithm.}
Depending on the initial learning rate, the upper-level loss even starts oscillating after an initial phase of decrease. 

Qualitative results can be seen in an example in Figure~\ref{figure:tv_disc_figure}, which shows reconstructions with standard TV and the learned discretizations with and without using adaptivity. Moreover, for the learned discretization filters, a threshold of $10^5$ lower-level iterations was used. The visualised results imply that for a limited budget there can even be visible differences in the resulting reconstructions. 

Finally, the potential ``worst-case'' scenario of the oscillating effect is demonstrated, which can arise if the learning rate is too high. In the exemplary setting of $\alpha=10^{-2}$ and $\epsilon=\delta=10^{-2}$ in Figure~\ref{fig:tv_budget} (dashed indigo curve), two points during the final learning phase were selected; one at the high-end of the oscillating upper-level loss curve (\indigoTriangle) and the other one at the low-end (\indigoStar). Using the learned filters for both settings yielded reconstruction results on the employed test image of $26.44$ dB and $26.70$ dB, respectively. This showcases that possible discrepancies with regards to reconstruction results depend on the stage of the oscillating upper-level loss. For comparison, the corresponding time-point for the same computational budget as in (\indigoTriangle) was also retrieved in the adaptive setting (\indigoCircle), which led to a reconstruction result of $26.88$ dB for the same test image. Moreover, to support these quantitative differences in the resulting reconstructions, this is supported qualitatively in Figure~\ref{fig:denoising_tv_oscillations}, where indeed a comparison even shows minor differences in the reconstructed images, emphasizing the potential benefits of using adaptive backtracking as proposed in Algorithm~\ref{alg:MAID} for this setting. Note that the non-adaptive setting in Figure~\ref{figure:tv_disc_figure} that was obtained for a budget of $10^5$ lower-level iterations shows quantitative reconstruction results somewhere in-between the two chosen extremes of the oscillating curve. 

\figDoubleColumn{0.99}{0 0 0 0}{tv/tv_disc_figure}{tv_disc_figure}{TV denoising with standard TV and learned discretization filters showing the influence of using the adaptive learning setting at a fixed computational budget of $10^5$ total lower-level iterations. Best viewed on screen.}

\begin{figure}[tbhp]
\centering 
\includegraphics[width=\linewidth]{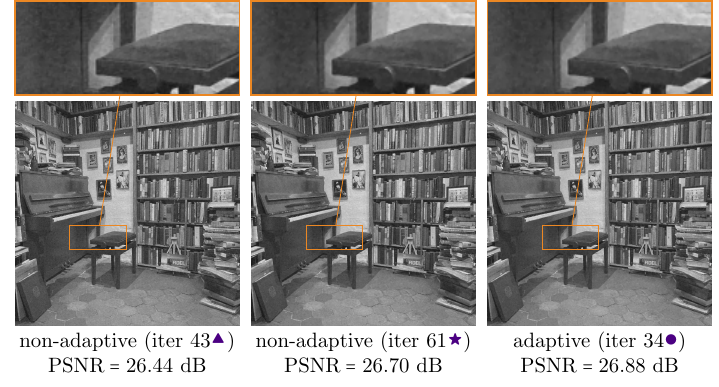}
\caption{TV denoising with the native inexact learning algorithm (non-adaptive setting) at two different time-points during the oscillating upper-level loss curve for initial parameters $\alpha=10^{-2}$ and $\epsilon=\delta=10^{-2}$: choosing a point at the high-end of the loss curve (\indigoTriangle) vs. the low-end (\indigoStar) can potentially have a significant increase in the resulting reconstruction. For completeness, the reconstruction from the corresponding time-point during the adaptive setting (\indigoCircle) is also shown. Best viewed on screen.}
\label{fig:denoising_tv_oscillations}
\end{figure}

\subsection{Learning Convex Regularizers} \label{sec:experiments:icnn}
In this section, we study the use of our algorithm within a bilevel learning framework to learn more expressive regularizers for imaging tasks. To illustrate this, we consider two convex data-driven regularizers based on input-convex neural networks (ICNNs). The first example is an ICNN which consists of two convolution layers, that is $R_{\theta}(x)=\psi_{w}(Wz)$ with $z=\psi_{w}(Vx)$. Here $V,W$ denote learnable convolution layers whose weights constitute the linear operator $K$, and $\psi_{w}$ is a smooth activation function defined as:
\begin{equation}
    \psi_w(x)=\begin{cases}
        0 &\text{ if }x\leq0,\\
        \frac{x^2}{2w}, &\text{ if }0<x<w,\\
        x-\frac{w}{2}, &\text{ otherwise},
    \end{cases}
\end{equation}
with smoothing parameter $w=0.01$. Following \cite{amos2017input}, we assume that $W$ is non-negative to ensure the convexity of $R_{\theta}$.

For the image reconstruction experiments, we focus on a sparse-view CT reconstruction task. The training and testing data sets consist of human abdominal CT scans provided by the Mayo Clinic Low-Dose CT Grand Challenge data set~\cite{mccollough2016tu}. To simulate measurements, a parallel beam geometry with 200 projection angles is employed, with additional Gaussian noise of standard deviation \( \sigma = 2 \) added. The training data set consists of 25 randomly sampled images from the original data set, each of size \( 512 \times 512 \).

Following the primal-dual framework introduced in \cite{wong2024primal}, we consider the following lower-level problem:
\begin{equation}\label{eq:icnn}
    \min_{x,z}\frac{1}{2}\|Ax-u\|^2+\frac{\mu_g}{2}\|x\|^2+\delta_C(Vx,z)+\gamma\psi_{w}(Wz),
\end{equation}
where $C:=\{(p,q)|\psi_w(p)\leq q\}$, and $\delta_C$ denotes the indicator function of $C$.
Here the linear operator $A$ denotes the X-ray transform with the prescribed geometry of the measurement setting and $u$ is the sinogram, representing the measured projections. \rv[final]{To ensure strong convexity of the primal function $g$, we introduce a quadratic regularization term weighted by $\mu_g$. In practice, we select a small value (\textit{e.g.}, $\mu_g = 10^{-8}$) to satisfy the strong convexity requirement without notably influencing the obtained results.}
For notational convenience, let $f_2(p,q)=\delta_C(p,q),f_3(v)=\gamma\psi_w(v)$. We then dualize the data fidelity  in~\eqref{eq:icnn} which gives rise to the saddle-point problem:
\begin{multline}\label{eq:icnn_spi}
       \min_{x,z}\max_{y_1,y_2,y_3}\langle Ax, y_1 \rangle + \frac{\mu_g}{2} \| x \|_2^2 
    - \frac{1}{2} \| y_1 + u \|_2^2\\ + \frac{1}{2} \|u\|_2^2 + \langle Vx,y_{2,1}\rangle + \langle z,y_{2,2}\rangle - f_2^*(y_2) \\+ \langle Wz, y_3 \rangle - f_3^*(y_3),
\end{multline} 
where $y_2=\{y_{2,1},y_{2,2}\}$. 
Additionally, the corresponding adjoint problem can be derived from this formulation, which is necessary for solving the corresponding adjoint saddle-point problem in our primal-dual style differentiation bilevel framework in Algorithm~\ref{alg:MAID}.

For the second example, we note that by choosing only one layer for the ICNN, we can recover the well-known Fields of Experts (FoE) objective~\cite{chen2014insights,roth2005fields}. The regularizer again consists of linear filters and the smoothed activation function $\psi_w$, which are once more combined with the convex data fidelity, yielding the following:
\begin{equation}\label{eq:foe_min}
\min_{x \in \mathcal{X}} \frac{1}{2} \Vert Ax - u\Vert_2^2+\frac{\mu_g}{2} \Vert x \Vert_2^2 + \gamma  \psi_w (Kx).
\end{equation} 
Note that this closely follows the primal objective of the ICNN with two layers in~\eqref{eq:icnn}, but the second variable $z$ that is minimized over can be omitted here due to using only one layer of ICNN.
Again, we dualize the non-smooth function as well as the data fidelity, which results in the saddle-point problem
\begin{multline}\label{eq:foe_sp}
    \min_{x} \max_{y_1, y_2} 
    \langle Ax, y_1 \rangle + \frac{\mu_g}{2} \| x \|_2^2 \\
    - \frac{1}{2} \| y_1 - u \|_2^2 + \langle Kx, y_2 \rangle - f^*(y_2).
\end{multline}

\begin{figure*}[t]
\centering
\includegraphics[width=\linewidth]{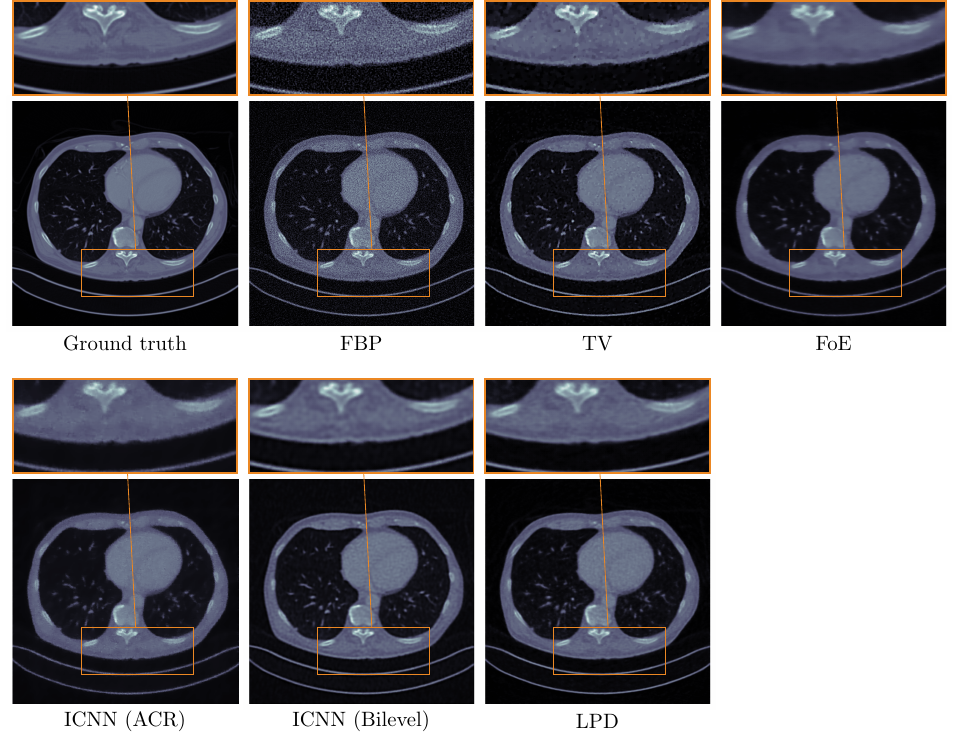}
\caption{Qualitative results of test image CT reconstructions in the sparse-view setting. FoE and ICNN (Bilevel) results were obtained using the learned filters with the proposed algorithm. Corresponding quantitative results can be found in Table~\ref{tab:ct_sparse}. Best viewed on screen.}
    \label{fig:ct_sparse_combined}
\end{figure*}

For the experiments, the FoE regularizer consists of $16$ linear filters with filter kernels of size $5 \times 5$ (denoted by the linear operator $K$). Meanwhile, for ICNN the first layer consists of 16 filters of size $5 \times 5$ and the second layer of $4\times 16$ filters of size $5 \times 5$.
While filtered backprojection (FBP) is the baseline reconstruction method in CT, we also include comparisons with other methods to ensure a comprehensive evaluation. Specifically, we compare the performance of Algorithm~\ref{alg:MAID} for the previously described 2-layer ICNN in~\eqref{eq:icnn_spi} trained in a bilevel manner against its adversarially trained counterpart (ACR). Additionally, we include comparisons with the FoE regularizer (i.e., the 1-layer ICNN in~\eqref{eq:foe_sp}), total variation (TV), and the data-driven learned primal-dual method (LPD) \cite{adler2018learned}. The specific learning settings are described in the following paragraph.

For the 2-layer ICNN, the parameters in \Cref{alg:MAID} are set to $\underline{\nu}=0.5$, $\bar{\nu}=1.05$, $\underline{\rho}=0.1$, $\bar{\rho}=1.1$. The initial parameters are set to $\alpha_0=10^{-4}$, $\epsilon_0=20$, $\delta_0=4$. 
The ACR model is trained using the framework described in \cite{mukherjee2020learned}. During training, the gradient penalty is set to 10, and the Adam optimizer is employed with a learning rate of $2\cdot10^{-5}$. The model is trained for 20 epochs on the data set outlined earlier.
For the FoE regularizer, similar as in Section~\ref{sec:experiments:tv}, the parameters in Algorithm~\ref{alg:MAID} were set to $\underline{\nu}=0.5$, $\bar{\nu}=1.05$, $\underline{\rho}=0.5$, $\bar{\rho}=\tfrac{10}{9}$. The initial parameters for $\epsilon_0$, $\delta_0$ and $\alpha_0$ were each set to $10$.
For the TV regularizer, we utilize the implementation provided by the Deep Inverse library\footnote{\url{https://deepinv.github.io/deepinv/}}. We perform 500 iterations of proximal gradient descent with a step-size of $10^{-3}$ and a hand-tuned regularization parameter of $2$. 
For LPD, we use the implementation provided in the publicly available\footnote{\url{https://deepinv.github.io/deepinv/auto_examples/unfolded/demo_learned_primal_dual.html}}. For our experiments, we employ $2$ primal-dual layers and use the AdamW optimizer \cite{loshchilov2017decoupled} implemented in PyTorch \cite{paszke2019pytorch}, with a learning rate of $10^{-3}$. Training is conducted for 30 epochs on the previously described data set.

Figure~\ref{fig:ct_sparse_combined} shows the qualitative results of reconstructing the test image in the sparse-view measurement scenario for all the above described methods as well as the corresponding ground truth image. 
Visually, the reconstruction results of the 2-layer ICNN trained using our proposed bilevel learning approach, as well as those from the learned primal-dual (LPD) method, exhibit high-quality reconstructions. They avoid both oversmoothed artifacts and overly fine-grained structures, which can often occur with other reconstruction methods.
To support these results, we also show quantitative results \rvv{for the test image} in Table~\ref{tab:ct_sparse}, which contains peak signal-to-noise ratio (PSNR) and structural similarity index measure (SSIM) for all methods on the employed test phantom. Note that this underlines the qualitative impressions from Figure~\ref{fig:ct_sparse_combined}. While the best scores are obtained using LPD, also the ICNN regularizer trained in our bilevel settings shows very competitive results. To some extent this can also be explained the number of learnable parameters which is in a ten-fold increase for LPD. 
On the other hand, the Fields of Experts regularizer that was trained using Algorithm~\ref{alg:MAID} shows remarkable reconstruction results given its reduced complexity and limited number of learnable parameters. 
Even more interestingly, using the exact same architecture for the 2-layer ICNNs, our bilevel learning framework manages to outperform ACR by a large margin. Since they have identical learnable parameters, this shows very well the efficiency of the employed bilevel learning framework using primal-dual style differentiation. 
Overall the experiments show competitive results of FoE and ICNN-based regularizers trained using Algorithm~\ref{alg:MAID}. 
\rv[final]{Future work could investigate the performance of the proposed method in more challenging scenarios, such as limited-angle measurement settings. Furthermore, the entire framework can be directly extended to other linear inverse problems with data-driven and potentially non-smooth regularizers.}

\begin{table}[htbp]
    \caption{PSNR and SSIM values of test reconstructions shown in Figure~\ref{fig:ct_sparse_combined} for different methods for sparse-view CT reconstruction \rvv{on the test image}. Best score for each metric in \textbf{bold}, second-best \underline{underlined}.}
    \centering
    \begin{tabular}{lccc}
    \hline
         Method &  PSNR $\uparrow$ & SSIM $\uparrow$ & $\#$ Parameters\\
         \hline \hline
        FBP & 21.07 & 0.23 & -\\
        TV & 28.68 & 0.74  & 1\\
        FoE~\cite{chen2014insights} & 30.69& \underline{0.86} & 400\\
        ICNN (ACR)~\cite{mukherjee2020learned} & 29.32& 0.79 & 2000\\
        ICNN (Bilevel) & \underline{31.43} &\underline{0.86}& 2000\\ 
        LPD~\cite{adler2018learned} & \textbf{34.18} &\textbf{0.88} & 50397\\ 
        \hline
    \end{tabular}
    \label{tab:ct_sparse}
\end{table}


\section{Conclusions}
\label{sec:conclusions}
In this work, we proposed a-posteriori error bounds for hypergradients computed via primal-dual style differentiation. By integrating this error analysis and control into an adaptive inexact method, we developed a robust and cost-efficient training framework leveraging primal-dual style differentiation. Specifically, we introduced a bilevel learning framework for training regularizers in variational image reconstruction, utilizing primal-dual style differentiation. In the scenario of learning improved discretizations of the total variation, we compared our adaptive learning framework to the non-adaptive case, highlighting its superior performance, especially with a constrained computational budget and its robust reconstruction results. Additionally, by reformulating well-known data-adaptive convex regularizers, we demonstrated the effectiveness of inexact primal-dual style differentiation in training these regularizers, achieving superior performance compared to state-of-the-art methods.

For future work, extending our analysis to non-smooth variational problems would be a valuable direction. Furthermore, adapting this framework to stochastic settings could offer significant speed-ups and enable application to larger data sets. \rv{A comparison between our framework and single-level methods, particularly \cite{singleloopValkonen}, which shares a similar type of lower-level structure, could be a promising direction for future work. This comparison could be further extended to fully first-order methods \cite{bome,fullyFirstOrderStochastic,penaltyMethodsNonconvexBO}, providing deeper insights into their relative advantages and limitations.} Finally, exploring the connections between this framework and other inexact methods, such as implicit function theorem-based approaches with linear solvers and automatic differentiation, represents another promising avenue for research. 

\bmhead{Acknowledgements}
The work of Mohammad Sadegh Salehi was supported by a scholarship from the EPSRC Centre for Doctoral Training in Statistical Applied Mathematics at Bath (SAMBa), under the project EP/S022945/1. Matthias J. Ehrhardt acknowledges support from the EPSRC (EP/S026045/1, EP/T026693/1, EP/V026259/1). Hok Shing Wong acknowledges support from the project EP/V026259/1.









\bibliography{sn-bibliography}

\end{document}